\documentclass{amsart} 
\usepackage{amsmath}
\usepackage[mathscr]{eucal} 
\usepackage{amssymb}
\usepackage{latexsym}
\usepackage{amsthm} 
\theoremstyle{plain} 
\newtheorem*{cor}{Corollary}
\newtheorem{theorem}{Theorem\,\!}[]

\newtheorem{theorema}{Theorem A\!\!}
\newtheorem{theoremb}{Theorem B\!\!}
\newtheorem{theoremc}{Theorem C\!\!}

\newtheorem{lemma}{Lemma}  
 
\newcommand{\sgn}{\mathop{\mathrm{sgn}}\nolimits}

\numberwithin{equation}{section}  
\theoremstyle{definition}

\newtheorem{assertion}{Proposition $P(m)$\!\!}

\theoremstyle{remark}
\newtheorem{remark}{Remark}

\def\XXint#1#2#3{{\setbox0=\hbox{$#1{#2#3}{\int}$}
\vcenter{\hbox{$#2#3$}}\kern-.5\wd0}}

\title[Marcinkiewicz integrals]
{Multiparameter Marcinkiewicz integrals and a resonance theorem }  
\author{Shuichi Sato} 
 
\begin{document} 

\address{Department of Mathematics, 
Faculty of Education, 
Kanazawa University,      
Kanazawa 920-1192, 
Japan}
\email{shuichi@kenroku.kanazawa-u.ac.jp} 
  \thanks{1991 {\it Mathematics Subject Classification.\/}
 42B25. }
\maketitle 

\section{Introduction} \label{s1} 

We shall prove the pointwise relations between some multiparameter square functions on $\bold R^n$, and we shall apply them 
to prove  $H^p-L^p$ and $L(\log L)^{n-1}-L^{1,\infty}$ estimates for 
multiparameter Marcinkiewicz integrals.  To show the weak type estimate,  
we shall also prove a resonance theorem on Orlicz spaces.  

 We first recall one-parameter square functions considered in Sunouchi \cite{21, 22}.  
 For $f \in \mathcal S(\bold R)$  (the Schwartz space), 
  the generalized Marcinkiewicz integral $\mu_{\alpha}(f)$ ($\alpha>0$)  
 is defined by 
 $$\mu_{\alpha}(f)(x) = \left( \int_0^{\infty}|S_t^{\alpha}(f)(x)|^2\,\frac{dt}{t}
 \right)^{1/2},$$ 
 where 
 $$S_t^{\alpha}(f)(x) = \frac{\alpha}{t} \int_0^t\left(1 - \frac{u}{t}\right)^{\alpha - 1}
\delta_u(f)(x)\,du , \quad \delta_u(f)(x) = f(x - u) - f(x + u) .$$ 
For $\alpha>0$, let $\varphi^{(\alpha)}(x)= 
 \alpha|1-|x||^{\alpha -1}(\chi_{[0,1]}(x)-\chi_{[-1,0]}(x))$.  
 Then we note that $S^{\alpha}_t(f)= f\star\varphi^{(\alpha)}_t$, 
 where $\varphi^{(\alpha)}_t(x) = 
 t^{-1}\varphi^{(\alpha)}(t^{-1}x)$. 
  The square function $\mu_1$ coincides with the ordinary Marcinkiewicz integral $\mu$ 
(see \cite{11}, \cite{26}, \cite{24}, \cite{17}) defined by 
$$\mu(f)(x) = \left( \int_0^{\infty}|\Delta_t(\mathcal I_ I(f))(x)|^2t^{-2}\,\frac{dt}{t}
 \right)^{1/2},$$ 
where $\mathcal I(f)(x) = \int_0^xf(y)\,dy$ and
$$\Delta_t(F)(x) = F(x+t) + F(x-t) - 2F(x).$$
\par 
 Let $\hat{f}(t) = \int_{-\infty}^{\infty}f(x)e^{-2\pi i xt}\,dx$ be the Fourier transform. 
 The square functions $h_{\beta}(f)$ ($\beta >0$) and  
 $D_{\alpha}(f)$  ($0<\alpha <1$) are defined as follows : 
$$h_{\beta}(f)(x) = \left( \int_0^{\infty}|\tau_R^{\beta}(f)(x)|^2\,\frac{dR}{R}
 \right)^{1/2},$$
 where
$$\tau_R^{\beta}(f)(x) = \beta\int_{-R}^R\frac{|t|}{R}\left(1 - \frac{|t|}{R}\right)^{\beta - 1}
\hat{f}(t)e^{2\pi i xt}\,dt ; $$
$$ D_{\alpha}(f)(x) =\left(\int_0^{\infty}\frac{|\delta_t(I_{\alpha}(f))(x)|^2}
{t^{1+2\alpha}}\,dt\right)^{1/2},$$
 where
 $$I_{\alpha}(f)(x) = \int_{-\infty}^{\infty} |\xi|^{-\alpha}\hat{f}(\xi)
e^{2\pi ix\xi}\,d\xi $$
is the Riesz potential.  
\par 
Then, the following results are known   
 (see \cite[\S 2]{21},  Theorems 1 and 2 of \cite{22}, and  
also \cite{10} for the $n$-dimensional ($n\geq 2$) case).

\begin{theorema}[\cite{21}] 
If $\hat{f}$ is supported in $[0, \infty)$, 
then $g^*_{2\lambda}(f)(x) \sim h_{\lambda}(f)(x)$  $(\lambda 
> 0)$, that is,  there exist positive constants $A$, $B$ independent of $f$ and $x$ 
 such that
$$Ag^*_{2\lambda}(f)(x) \leq h_{\lambda}(f)(x) \leq Bg^*_{2\lambda}(f)(x).$$
Here, $g^*_{\lambda}(f)$  is the Littlewood-Paley function defined by
$$g^*_{\lambda}(f)(x) = \left(\int_{\bold R^2_+}\left(\frac{t}{t+|x-y|}
\right)^{\lambda}|\nabla u(y,t)|^2 \,dy\,dt\right)^{1/2},$$
where $u(y,t)$ denotes the Poisson integral of $f :$ $u(y,t) = P_t\star f(y)$,  
$P_t(x) = t/\pi(x^2 + t^2)$, and  $\bold R^2_+ = \bold R \times 
(0, \infty)$.
\end{theorema}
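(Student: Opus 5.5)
Since $\hat f$ is supported in $[0,\infty)$, the Poisson integral $u(y,t)=P_t\star f(y)=\int_0^\infty \hat f(\xi)e^{-2\pi t\xi}e^{2\pi i y\xi}\,d\xi$ is analytic in $z=y+it$. We have $|\nabla u|^2=2|F'(z)|^2$ with $F=u$, and
$$F'(z)=2\pi i\int_0^\infty \xi\hat f(\xi)e^{2\pi i z\xi}\,d\xi .$$
Hence $|\nabla u(y,t)|^2\simeq \bigl|\int_0^\infty \xi\hat f(\xi)e^{-2\pi t\xi}e^{2\pi iy\xi}d\xi\bigr|^2$. The plan is to express both sides in Fourier terms and compare them through one-variable Plancherel identities in an auxiliary variable.

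\textbf{Rewriting $g^*_{2\lambda}$.} Put $y=x+s$. The kernel $(t/(t+|s|))^{2\lambda}$ is comparable to $(1+s^2/t^2)^{-\lambda}$. So, up to constants,
$$g^*_{2\lambda}(f)(x)^2\simeq\int_0^\infty\!\!\int_{\bold R}\bigl(1+s^2/t^2\bigr)^{-\lambda}\,|G(x+s+it)|^2\,ds\,dt,\qquad G(z)=\int_0^\infty\xi\hat f(\xi)e^{2\pi iz\xi}d\xi .$$
For fixed $x$, write $a(\xi)=\xi\hat f(\xi)e^{2\pi i x\xi}$. Then $G(x+s+it)$ is the Fourier integral in $s$ of $a(\xi)e^{-2\pi t\xi}$. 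The weight $(1+s^2/t^2)^{-\lambda}$ does not give an exact Plancherel identity. The natural route is to use the holomorphy of $G$. The weighted integral in $s$ against $(t^2+s^2)^{-\lambda}$, together with the extra $t$-integration, should be equivalent to an integral of $|G|^2$ against the weight $t^{2\lambda-1}$ after a change of variables on the half-plane. Concretely, I would use the identity $(t^2+s^2)^{-\lambda}=c\int_0^\infty r^{2\lambda-1}e^{-2\pi r t}\,(\text{Poisson-type factor})\,dr$, or equivalently the conformal subordination $\int_0^\infty |G(w+it)|^2 t^{2\lambda-1}dt$. This reduces the left side to an expression of the form
$$\int_0^\infty\Bigl|\int_0^\infty a(\xi)\,\Phi_\lambda(\xi/R)\,d\xi\Bigr|^2\frac{dR}{R}$$
with an explicit kernel $\Phi_\lambda$.

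\textbf{Rewriting $h_\lambda$.} We have $\tau_R^\lambda(f)(x)=\lambda\int_0^R (\xi/R)(1-\xi/R)^{\lambda-1}\hat f(\xi)e^{2\pi i x\xi}d\xi$, because the negative frequencies vanish. Substitute $R=e^{v}$ and $\xi=e^{w}$. Then $h_\lambda(f)(x)^2$ is the $L^2(dv)$ norm of a convolution, in the multiplicative group $(0,\infty)$, of $b(\xi)=\xi\hat f(\xi)e^{2\pi i x\xi}=a(\xi)$ with the kernel $k_\lambda(\rho)=\lambda\rho^{-1}(1-\rho)^{\lambda-1}\chi_{(0,1)}(\rho)$ (up to normalization). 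Take the Mellin transform in $\xi$, which is Plancherel on $L^2(d\xi/\xi)$-type spaces. This gives
$$h_\lambda(f)(x)^2=c\int_{\bold R}|\widetilde{a}(\sigma)|^2\,|M k_\lambda(\sigma)|^2\,d\sigma ,$$
where $Mk_\lambda$ is a Beta function $B(\lambda, i\sigma)$-type expression, namely $\Gamma(\lambda+1)\Gamma(i\sigma)/\Gamma(\lambda+i\sigma)$ up to shifts. By the same Mellin–Plancherel argument in $R$, the Poisson side gives $\int|\widetilde a(\sigma)|^2|\Gamma(2\lambda+i\sigma)\cdots|^2d\sigma$-type expressions. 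The Mellin transform of $e^{-\rho}$-type kernels is $\Gamma(s)$, and the half-plane weight $t^{2\lambda-1}$ integrated against $e^{-4\pi t\xi}$ produces $\Gamma(2\lambda)$ factors.

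\textbf{Comparing the multipliers (main obstacle).} The main obstacle is that the pointwise comparison must hold for every $x$ and $f$. So I must show that the two Mellin multipliers $m_1(\sigma)$ (from $g^*_{2\lambda}$) and $m_2(\sigma)$ (from $h_\lambda$) satisfy $m_1\simeq m_2$ uniformly in $\sigma\in\bold R$. By Stirling's formula, $|\Gamma(i\sigma)/\Gamma(\lambda+i\sigma)|^2\sim|\sigma|^{-2\lambda-?}$ as $|\sigma|\to\infty$, and there is polynomial behavior near $\sigma=0$. I would check that both multipliers decay like $|\sigma|^{-2\lambda}$ at infinity and are bounded and nonvanishing on compacts. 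The latter requires that $\Gamma$ has no zeros and that the singularity of $\Gamma(i\sigma)$ at $0$ is cancelled by the factor $\xi$ in $a$, i.e.\ a shift $s\mapsto s+1$.

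The delicate point is making the $g^*$ side exactly Mellin-diagonal despite the weight $(1+s^2/t^2)^{-\lambda}$. I would first try to replace this weight by the comparable quantity obtained from subordinating $t$. I would use that $\int_{\bold R}(t^2+s^2)^{-\lambda}|G(x+s+it)|^2ds$ is equivalent to an integral of $|G(x+i\tau)|^2$ against an explicit kernel in $\tau$, via the subharmonicity of $|G|^2$ and the mean-value property in both directions. That would reduce everything to one-variable integrals along the vertical line through $x$.
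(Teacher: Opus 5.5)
You have a genuine gap, and it sits on the $g^*$ side.

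The Mellin treatment of $h_\lambda$ is fine. With $a(\xi)=\xi\hat f(\xi)e^{2\pi i x\xi}$, $\tau^\lambda_R(f)(x)$ is a multiplicative convolution of $a$ with $\lambda\rho(1-\rho)_+^{\lambda-1}$. Its Mellin transform is $\Gamma(\lambda+1)\Gamma(1+i\sigma)/\Gamma(\lambda+1+i\sigma)$ (the shift you anticipated), of modulus comparable to $(1+|\sigma|)^{-\lambda}$. But you never compute $g^*$, and your remark that the weight $(1+s^2/t^2)^{-\lambda}$ gives no exact Plancherel identity is mistaken: such an identity is the whole proof (the proof of Theorem 1 with $n=1$). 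The missing idea is to factor the weight through a function with one-sided spectrum, $(t^2+s^2)^{-\lambda}=|(t-is)^{-\lambda}|^2$, where by Lemma 1
\[
(t-is)^{-\lambda}=\frac{(2\pi)^{\lambda}}{\Gamma(\lambda)}\int_0^\infty r^{\lambda-1}e^{-2\pi rt}e^{2\pi i sr}\,dr .
\]
The function $G(x+s+it)=\int_0^\infty a(\eta)e^{-2\pi t\eta}e^{2\pi i s\eta}\,d\eta$ also has spectrum in $[0,\infty)$. Hence the Fourier transform in $s$ of $(t-is)^{-\lambda}G(x+s+it)$ vanishes for $\xi<0$. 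For $\xi\ge 0$ it is a convolution over $0\le\eta\le\xi$ only, and there $e^{-2\pi t(\xi-\eta)}e^{-2\pi t\eta}=e^{-2\pi t\xi}$ leaves the $\eta$-integral. What remains is $\int_0^\xi(\xi-\eta)^{\lambda-1}\eta\hat f(\eta)e^{2\pi i x\eta}\,d\eta=\lambda^{-1}\xi^{\lambda}\tau^\lambda_\xi(f)(x)$. Now apply Plancherel in $s$, multiply by $t^{2\lambda}$, and use $\int_0^\infty t^{2\lambda}e^{-4\pi t\xi}\,dt=c_\lambda\xi^{-2\lambda-1}$. This gives the exact identity
\[
\int_0^\infty\!\!\int_{-\infty}^{\infty}\Bigl(\frac{t^2}{t^2+|x-y|^2}\Bigr)^{\lambda}|\nabla u(y,t)|^2\,dy\,dt=C_\lambda\,h_\lambda(f)(x)^2 .
\]
The theorem then follows from $(t/(t+|s|))^{2\lambda}\le(t^2/(t^2+s^2))^{\lambda}\le 2^{\lambda}(t/(t+|s|))^{2\lambda}$. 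The two multipliers you planned to compare via Stirling are in fact proportional.

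The substitutes you propose cannot replace this step. Subharmonicity of $|G|^2$ bounds point values by disc averages, so it controls vertical-line integrals by area integrals, never the reverse; there is no reverse mean-value inequality. In fact no expression $\int_0^\infty|G(x+i\tau)|^2W(\tau)\,d\tau$ can dominate $g^*_{2\lambda}(f)(x)$. It sees $a$ only through the Laplace transform $\int_0^\infty a(\xi)e^{-2\pi\tau\xi}\,d\xi$, which kills high Mellin frequencies faster than any power. Take $x=0$ and $\hat f(\xi)=\xi^{i\sigma_0}\phi(\xi)$ with $\phi\in C_c^\infty((0,\infty))$. Integrating by parts in $\log\xi$ gives $G(i\tau)=O(|\sigma_0|^{-N})$ for every $N$, uniformly in $\tau$ and with exponential decay in $\tau$. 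Meanwhile $h_\lambda(f)(0)\gtrsim|\sigma_0|^{-\lambda}$ as $\sigma_0\to\infty$. Compare the exponentially decaying multiplier (5.2) of the vertical function $g_0$ with the polynomial ones (3.7), (3.8).

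The same objection applies to your other suggestions. A ``$\Gamma(2\lambda+i\sigma)$-type'' multiplier on the Poisson side would decay like $e^{-\pi|\sigma|}$, contradicting the $|\sigma|^{-2\lambda}$ behaviour you are aiming for. The Laplace-type representation you sketch, taken literally with $e^{2\pi i sr}$, produces the complex power $(t-is)^{-2\lambda}$, not $(t^2+s^2)^{-\lambda}$. Your intuition that the $g^*$ quadratic form is Mellin-diagonal is correct, since it is dilation invariant. But its multiplier still has to be computed, and the one-sided factorization above is what computes it.
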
  

\begin{theoremb}[\cite{22}]  
If $\beta = 1/2 + \alpha$   $(\alpha > 0)$, then
$$h_{\beta}(Hf)(x) \sim \mu_{\alpha}(f)(x),$$
where $H$ denotes the Hilbert transform $: \widehat{Hf}(\xi) = -i\sgn(\xi)
\hat{f}(\xi)$.  
 $($See \cite{4} for the inequality 
$h_{\beta}(Hf)\leq c\mu_{\alpha}(f)$.$)$
\end{theoremb}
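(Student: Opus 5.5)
The idea is to prove the equivalence pointwise by fixing $x$ and using Plancherel for the Mellin transform on the multiplicative group $(0,\infty)$ with measure $dt/t$. That reduces both square functions to weighted $L^2$ norms of one and the same function of $y$. Fix $x$ and put $F(\xi)=\hat f(\xi)e^{2\pi i x\xi}$. Then
$$S_t^\alpha(f)(x)=\int_{-\infty}^{\infty}\psi(t\xi)F(\xi)\,d\xi,\qquad \psi(s)=\widehat{\varphi^{(\alpha)}}(s)=-2i\alpha\int_0^1(1-u)^{\alpha-1}\sin(2\pi us)\,du .$$
Similarly, $\tau_R^\beta(Hf)(x)=\int m(\xi/R)F(\xi)\,d\xi$ with $m(\xi)=-i\beta\,\sgn(\xi)\,|\xi|(1-|\xi|)_+^{\beta-1}$. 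Both $\psi$ and $m$ are odd, purely imaginary, vanish to first order at $0$ and are bounded. Also $|\psi(s)|\le C|s|^{-\alpha}$ at infinity, because the endpoint singularity $(1-u)^{\alpha-1}$ dominates the oscillatory integral.

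Split $F=F_+ + F_-$ according to the sign of $\xi$. For $\xi>0$, substituting $\xi=e^{v}$ and $t=e^{-w}$ turns $t\mapsto S_t^\alpha(f)(x)$ into an additive convolution on $\bold R$ of $\psi(e^{\cdot})$ with $\xi F_+(\xi)$. The same holds for $R\mapsto \tau_R^\beta(Hf)(x)$ with $m(e^{\cdot})$. Taking Fourier transforms in $w$ (that is, Mellin transforms on the line $\operatorname{Re}z=0$) gives
$$\mu_\alpha(f)(x)^2=c\int_{-\infty}^{\infty}\bigl|\mathcal M\psi(iy)\bigr|^2\,\bigl|\mathcal M(\xi F_+)(iy)-\mathcal M(\xi F_-(-\cdot))(iy)\bigr|^2\,dy,$$
where $\mathcal M g(z)=\int_0^\infty g(u)u^{z-1}\,du$ up to the sign convention. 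Oddness of $\psi$ is what makes the two half-lines combine with the same factor $\mathcal M\psi(iy)$, so no uncontrolled cross terms appear. The identical formula holds for $h_\beta(Hf)(x)^2$ with $\mathcal M m(iy)$ in place of $\mathcal M\psi(iy)$, since $m$ is odd as well. This is precisely where the Hilbert transform enters. Without $H$, the $\tau$-multiplier would be even, and the two half-lines would combine with a different sign pattern than for the odd $\psi$. These Mellin integrals converge absolutely on $\operatorname{Re}z=0$ because of the behaviour at $0$ and $\infty$ noted above. Consequently the theorem follows once we show
$$0<A\le \frac{|\mathcal M\psi(iy)|}{|\mathcal M m(iy)|}\le B<\infty\qquad (y\in\bold R).$$

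Both Mellin transforms are explicit. For $m$ we get a Beta function,
$$\mathcal M m(z)=-i\beta\,B(z+1,\beta)=-i\beta\,\frac{\Gamma(z+1)\Gamma(\beta)}{\Gamma(z+1+\beta)} .$$
For $\psi$, Fubini together with $\int_0^\infty \sin(2\pi us)s^{z-1}\,ds=(2\pi u)^{-z}\Gamma(z)\sin(\pi z/2)$ (valid for $-1<\operatorname{Re}z<1$, continued to $\operatorname{Re}z=0$) gives
$$\mathcal M\psi(z)=-2i\alpha\,(2\pi)^{-z}\Gamma(z)\sin\Bigl(\frac{\pi z}{2}\Bigr)B(1-z,\alpha).$$
At $z=iy$ neither expression vanishes. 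The factor $\Gamma(z)\sin(\pi z/2)$ has a removable singularity at $z=0$ with value $\pi/2$. The remaining Gamma factors have no zeros, and $\sin(\pi iy/2)\neq0$ for $y\ne0$. By continuity, the ratio is therefore bounded above and below on every compact set of $y$.

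The main step is the behaviour as $|y|\to\infty$, and this is where $\beta=1/2+\alpha$ is forced. By Stirling, $|\Gamma(1+iy)/\Gamma(1+\beta+iy)|\sim|y|^{-\beta}$. On the other side, $|\Gamma(iy)\sin(\pi iy/2)|\sim |y|^{-1/2}$, because $|\Gamma(iy)|\sim\sqrt{2\pi}\,|y|^{-1/2}e^{-\pi|y|/2}$ and $|\sin(i\pi y/2)|\sim e^{\pi|y|/2}/2$. Moreover $|B(1-iy,\alpha)|\sim\Gamma(\alpha)|y|^{-\alpha}$. Hence $|\mathcal M\psi(iy)|\sim|y|^{-1/2-\alpha}$ and $|\mathcal M m(iy)|\sim|y|^{-\beta}$, and their ratio tends to a positive constant exactly when $\beta=\alpha+1/2$.

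The delicate points I expect are two. The first is justifying the interchange of integrals defining $\mathcal M\psi$ on the critical line, where the sine integral is only conditionally convergent. I would handle it by computing on $0<\operatorname{Re}z<\alpha$ and using analytic continuation plus dominated convergence, approaching the line. The second is checking that Mellin–Plancherel applies to $\xi F_\pm$ for $f\in\mathcal S(\bold R)$, which is routine since $\hat f$ is rapidly decreasing.
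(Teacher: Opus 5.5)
Your proof is correct. It follows the same overall strategy as the paper's (the case $n=1$ of \S 3): reduce both square functions to multipliers on a common function and compare Gamma factors by Stirling. The difference is that you work on the frequency side, which is the mirror image of the paper's physical-side argument.

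The paper fixes $x_0$ and uses the common function $\Psi(x)=\delta_{e^{-x}}(f)(x_0)$. It writes $\mu_\alpha(f)(x_0)^2=\|\Psi\star K_\alpha\|_2^2$ and $h_\beta(Hf)(x_0)^2=\|\Psi\star L_\beta\|_2^2$, where $L_\beta(x)=e^xF_\beta(e^x)$ and $F_\beta$ is the sine kernel of the odd multiplier. In that setup the $\mu_\alpha$ kernel has a Beta-function transform (Lemma 2), and the conditionally convergent sine integral sits on the $h_\beta$ side (Lemma 3).

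You take the odd part $G(\xi)=F(\xi)-F(-\xi)$ as the common function, so the roles swap: $\mathcal M m$ is the Beta function and the sine integral enters through $\mathcal M\psi$. Your individual decay rates differ from (3.7)--(3.8) by the factor $|y|^{-1/2}$ coming from $\Gamma(z)\sin(\pi z/2)$. The ratio is the same, so $\beta=\alpha+1/2$ plays the same role.

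Your route has a technical advantage. The weight sits on $b(v)=e^vG(e^v)\in L^2$, so both kernels $\psi(e^{-s})$ and $m(e^{-s})$ lie in $L^1(ds)$ for every $\alpha>0$. Young's inequality and Plancherel then give both $L^2$ identities at once, and the role of $H$ (oddness of the multiplier) is transparent.

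In the paper, $L_\beta$ is not integrable when $\beta\le 1$; already $F_1(v)=-\cos(2\pi v)/(\pi v)+O(v^{-2})$. That is why Lemma 3 is stated weakly against test functions and proved with an $\epsilon$-shift and analytic continuation in $\beta$. In exchange, the paper's route (following Sunouchi) keeps $\mu_\alpha$ in its defining form, so that side is a one-line Beta integral. The same $\Psi$ is then reused for $D_\alpha$ and $g_0$ in \S\S 4--5.

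There are two small slips, neither of which affects the argument.

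First, $m$ is unbounded near $|\xi|=1$ when $\beta<1$. Also, for $\alpha>1$ the smooth endpoint $u=0$ contributes a $1/s$ term, so in general you only have $\psi(s)=O(|s|^{-\min\{1,\alpha\}})$. All you actually use is $\int_0^\infty|\psi(s)|\,ds/s<\infty$ and $\int_0^1|m(u)|\,du/u<\infty$, and both hold.

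Second, for computing $\mathcal M\psi$, the strip $0<\mathrm{Re}\,z<\alpha$ does not help, since the sine integral is still only conditionally convergent there. On $-1<\mathrm{Re}\,z<0$, however, $\int_0^\infty|\sin(2\pi us)|\,s^{\mathrm{Re}\,z-1}\,ds=C\,u^{-\mathrm{Re}\,z}$, so Fubini applies directly. Both sides are holomorphic on $-1<\mathrm{Re}\,z<\min\{1,\alpha\}$, a strip containing the imaginary axis in its interior. The identity theorem therefore gives the formula on $\mathrm{Re}\,z=0$ with no limiting argument.
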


\begin{theoremc}[\cite{22}]  If $\beta = 1/2 + \alpha$ and $0<\alpha <1$, then
$$h_{\beta}(Hf)(x) \sim D_{\alpha}(f)(x).$$ 
\end{theoremc}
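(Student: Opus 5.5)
Theorem C follows from Theorem B once we prove the pointwise equivalence $D_\alpha(f)(x)\sim\mu_\alpha(f)(x)$ for $0<\alpha<1$. I would prove this directly and then invoke Theorem B with $\beta=1/2+\alpha$.

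\emph{Step 1: write $S^\alpha_t$ as a fractional integral of $\delta_u(f)$.} The kernel $\alpha(1-u/t)^{\alpha-1}$ is a Riemann--Liouville kernel. Writing $F(u)=\delta_u(f)(x)$, we have
$$S_t^\alpha(f)(x)=\alpha\Gamma(\alpha)\,t^{-\alpha}\,(\mathcal R^\alpha F)(t),\qquad \mathcal R^\alpha F(t)=\frac1{\Gamma(\alpha)}\int_0^t(t-u)^{\alpha-1}F(u)\,du.$$
Hence
$$\mu_\alpha(f)(x)^2=c\int_0^\infty|\mathcal R^\alpha F(t)|^2t^{-2\alpha-1}\,dt.$$

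\emph{Step 2: express $F$ through $G(u)=\delta_u(I_\alpha f)(x)$.} On the Fourier side, $\delta_u$ has symbol $-2i\sin(2\pi u\xi)$. So $G(u)=\int|\xi|^{-\alpha}(-2i\sin 2\pi u\xi)\hat f(\xi)e^{2\pi i x\xi}\,d\xi$. The fractional integral in $u$ of $\sin(2\pi u\xi)$ satisfies a known identity: $\mathcal R^\alpha[\sin(2\pi\cdot\xi)](t)$ equals $|2\pi\xi|^{-\alpha}$ times a Lommel/Bessel-type function. This suggests that $\mathcal R^\alpha F$ and $G$ are related by a one-dimensional operator acting only in the variable $u$, for each fixed $x$. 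Concretely, I would show that $G=T(\mathcal R^\alpha F)$ and $\mathcal R^\alpha F=T'G$, where $T$ and $T'$ are operators on functions of $u>0$ that commute with dilations and are homogeneous of degree $0$. The candidate for $T$ is composition of the Weyl fractional derivative with a Mellin multiplier.

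\emph{Step 3: Mellin analysis.} Both quantities are weighted $L^2((0,\infty),t^{-2\alpha-1}dt)$ norms of functions of $u$. By Plancherel for the Mellin transform on the line $\mathrm{Re}\,s=\alpha$, the equivalence $D_\alpha\sim\mu_\alpha$ reduces to a two-sided bound on the ratio of the two Mellin multipliers on that line. One multiplier comes from $\mathcal R^\alpha$, namely $\Gamma(1-s-\alpha)/\Gamma(1-s)$-type gamma quotients. The other comes from the sine transform relating odd functions of $u$ to $\xi$, and involves the factor $\Gamma(s)\sin(\pi s/2)$. Since $F$ and $G$ are both odd-in-frequency sine transforms of the same data up to the factor $|\xi|^{-\alpha}$, the ratio of the Mellin transforms of $\mathcal R^\alpha F$ and $G$ is explicitly a quotient of gamma and trigonometric functions evaluated on a vertical line.

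\emph{Main obstacle.} The hard step is to verify that this quotient is bounded above and below on the relevant vertical line. This uses Stirling's formula to show the $|\mathrm{Im}\,s|$ growth cancels, and the restriction $0<\alpha<1$ to ensure no zeros or poles of $\sin$ or $\Gamma$ lie on the line. This is exactly where $0<\alpha<1$ enters, and is presumably why Theorem C is stated only in that range.

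A second technical point is justifying the Mellin/Fourier interchanges for $f\in\mathcal S$, which I would handle by working with $\hat f$ vanishing near $0$ and then taking a limit.

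Combining the resulting $D_\alpha(f)\sim\mu_\alpha(f)$ with Theorem B gives $h_\beta(Hf)\sim D_\alpha(f)$.
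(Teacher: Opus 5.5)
Your argument is correct. Its core is the same as the paper's: everything is reduced to the single function $\psi(u)=\delta_u(f)(x)$ and the comparison is made on the Mellin side. Indeed, the paper's $\widehat{\Psi}(\xi)$ is exactly the Mellin transform of $\psi$ at $2\pi i\xi$.

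\textbf{What the paper does.} Its proof of Theorem 3 with $n=1$ runs as follows:
\begin{enumerate}
\item[(i)] it writes $\delta_t(I_\alpha f)$ as a Mellin convolution of $\psi$ with the explicit kernel $k_\alpha$ (Lemma 5);
\item[(ii)] it computes $\widehat{J_\alpha}$ by a regularized limiting argument (Lemma 6);
\item[(iii)] it gets $|\widehat{J_\alpha}(\xi)|\sim(1+|\xi|)^{-\alpha}$ from Stirling's formula;
\item[(iv)] it compares this directly with $|A_\beta(\xi)|\sim(1+|\xi|)^{1/2-\beta}$ from (3.8), i.e.\ with $h_\beta(Hf)$ itself.
\end{enumerate}
In the paper, $D_\alpha\sim\mu_\alpha$ is only a by-product (Remark 2).

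\textbf{What you do differently.} You pass through $\mu_\alpha$ and then quote Theorem B. You also get the $D_\alpha$-multiplier from the sine-transform representation of $\delta_u$, so $k_\alpha$ never appears.

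\textbf{What your route buys.} It gives more than you claim. Use the convention $\mathcal M H(s)=\int_0^\infty H(t)t^{s-1}\,dt$, which is the one in which $\mathcal R^\alpha$ has multiplier $\Gamma(1-\alpha-s)/\Gamma(1-s)$. Then the reflection formula collapses your quotient exactly to $\mathcal M G(s)/\mathcal M(\mathcal R^\alpha F)(s)=(2\pi)^\alpha\cos(\pi(s+\alpha)/2)/\cos(\pi s/2)$, so Stirling is not needed at all.

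In this convention the Plancherel line for the weight $t^{-2\alpha-1}$ is $\mathrm{Re}\,s=-\alpha$, not $\alpha$. On that line the modulus of the quotient lies between $(2\pi)^\alpha$ and $(2\pi)^\alpha/\cos(\pi\alpha/2)$. This yields the explicit bounds $\Gamma(\alpha+1)^{-1}(2\pi)^\alpha\mu_\alpha(f)\le D_\alpha(f)\le \Gamma(\alpha+1)^{-1}(2\pi)^\alpha\cos(\pi\alpha/2)^{-1}\mu_\alpha(f)$. The upper constant blows up exactly as $\alpha\to1$, which shows cleanly where the restriction $0<\alpha<1$ enters.

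\textbf{What remains.} The operators $T,T'$ of your Step 2 are unnecessary once you use Plancherel. The only real technical work left is interchanging the Mellin integral with the conditionally convergent $\int_0^\infty u^{s-1}\sin(2\pi u\xi)\,du$. This needs the same kind of $\epsilon$-regularization the paper uses in Lemmas 3 and 6; restricting to $\hat f$ vanishing near $0$ does not by itself handle the behaviour as $u\to\infty$.
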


In this note we consider the multiparameter analogues of the above square functions.  
Let $\alpha = (\alpha_1, \dots , \alpha_n)$, $\alpha_i > 0$;  $\beta = 
(\beta_1, \dots , \beta_n)$, $\beta_i > 0$;  $\lambda = 
(\lambda_1, \dots , \lambda_n)$, $\lambda_i > 0$.  For $f \in 
\mathcal S(\bold R^n)$, we define
$$\mu_{\alpha}(f)(x) = \left( \int_0^{\infty}\dots \int_0^{\infty} 
|S_t^{\alpha}(f)(x)|^2\,\frac{dt_1}{t_1}\dots \frac{dt_n}{t_n}
 \right)^{1/2},$$ 
 where $S^{\alpha}_t = S^{\alpha_1}_{t_1}\otimes \dots \otimes S^{\alpha_n}_{t_n}$ 
 (tensor product),  $t = (t_1, \dots , t_n)$, $x = (x_1, \dots , x_n)$, and so 
 $S_t^{\alpha}(f)(x)= f\star \Phi^{(\alpha)}_t(x)$, with $\Phi^{(\alpha)}_t(x)= 
 \prod_{i=1}^n\varphi^{(\alpha_i)}_{t_i}(x_i)$ ;  
$$h_{\beta}(f)(x) = \left( \int_0^{\infty}\dots \int_0^{\infty} 
|\tau_R^{\beta}(f)(x)|^2\,\frac{dR_1}{R_1}\dots \frac{dR_n}{R_n}
 \right)^{1/2},$$ 
where $\tau^{\beta}_R = \tau^{\beta_1}_{R_1}\otimes \dots \otimes 
\tau^{\beta_n}_{R_n}$,  $R = (R_1, \dots , R_n)$ ; 
$$D_{\alpha}(f)(x) = \left( \int_0^{\infty}\dots \int_0^{\infty} 
|(\delta_{t_1}\otimes \dots \otimes \delta_{t_n})(I_{\alpha}(f))(x)|^2
t_1^{-2\alpha_1}\dots t_n^{-2\alpha_n}\,\frac{dt_1}{t_1}\dots \frac{dt_n}{t_n}
 \right)^{1/2},$$ 
where $I_{\alpha}=I_{\alpha_1}\otimes \dots \otimes I_{\alpha_n}$ and $0<\alpha_i<1$ ;
$$g^*_{\lambda}(f)(x) = \left(\int_{\bold R^2_+\times \dots \times\bold R^2_+}
\prod_{i=1}^n\left(\frac{t_i}{t_i+|x_i - y_i|}\right)^{\lambda_i}
|\nabla_1\dots \nabla_n u(y,t)|^2 \,dy\,dt\right)^{1/2},$$
where $u(y,t)$ denotes the iterated Poisson integral of $f$: $u(y,t) = P_t\star f(y)$,  
$P_t(x) = \prod_{i=1}^nt_i/\pi(x^2_i + t^2_i)$, and  
$$|\nabla_1\dots \nabla_n u(y,t)|^2 = \sum_{i_1=0}^1\dots \sum_{i_n=0}^1 
\left|(\partial/\partial y_{i_1}^{(1)})\dots 
(\partial/\partial y_{i_n}^{(n)}) u(y,t)\right|^2,$$ 
with $\partial/\partial y_{0}^{(j)} = \partial/\partial t_j$, 
$\partial/\partial y_{1}^{(j)} = \partial/\partial y_j$.

We also consider 
$$g_0(f)(x)= \left( \int_0^{\infty}\dots \int_0^{\infty} 
|(\partial/\partial x_1)\dots 
(\partial/\partial x_n) u(x,t)|^2
t_1\dots t_n\, dt_1\dots dt_n \right)^{1/2}.$$ 

Put $\mu = \mu_{(1, \dots, 1)}$.  Then we note that $\mu$ is the multiparameter 
analogue of the ordinary Marcinkiewicz integral on $\bold R$, and it can be 
expressed as follows: 
$$\mu(f)(x) = \left( \int_0^{\infty}\dots \int_0^{\infty} 
|(\Delta_{t_1}\otimes \dots \otimes \Delta_{t_n})(F)(x)|^2t_1^{-2}\dots t_n^{-2}
 \,\frac{dt_1}{t_1}\dots \frac{dt_n}{t_n}\right)^{1/2},$$ 
where $F(x) = (\mathcal I\otimes \dots \otimes \mathcal I)(f)(x) = 
\int_0^{x_1}\dots \int_0^{x_n} f(y_1, \dots , y_n)\,dy_1\dots dy_n$.

Let $f \in \mathcal S (\bold R^n)$.  We shall prove the multiparameter 
analogues of Theorems A, B and C: 

\begin{theorem}  If $\hat {f}$ $($the $n$-dimensional Fourier transform$)$ 
 is supported in $\mathcal D_0 = \{x  
\in \bold R^n : x_i \geq 0 , i = 1, 2, \dots , n \}$, then 
$$g^*_{2\lambda}(f)(x) \sim h_{\lambda}(f)(x) \quad \text{for all} \quad \lambda = 
(\lambda_1, \dots , \lambda_n), \quad \lambda_i > 0. $$ 
\end{theorem}

\begin{theorem}  Let $\alpha = (\alpha_1, \dots , \alpha_n)$,  
$\beta = (\beta_1, \dots , \beta_n)$.   Suppose that $\beta = (1/2, \dots , 1/2) + 
\alpha$, $\alpha_i > 0$.  Then 
$$h_{\beta}((H\otimes \dots \otimes H)(f))(x) \sim \mu_{\alpha}(f)(x).$$
\end{theorem}

\begin{theorem}  Let $\alpha$ and $\beta$ be multi-indices as in Theorem 2.  
 If $\beta = (1/2, \dots , 1/2) + \alpha$,  $0<\alpha_i <1$  $(i= 1, 2, \dots , n)$,  then 
$$h_{\beta}((H\otimes \dots \otimes H)(f))(x) \sim D_{\alpha}(f)(x).$$
\end{theorem}

We shall use  Theorems 1  and 2 to show the $H^p-L^p$ boundedness of $h_\beta$ 
and $\mu_\alpha$.

\begin{theorem}  Let $f \in H^p \cap \mathcal S(\bold R^n)$, where $H^p$ denotes 
the Hardy space on the product domain $\bold R \times \dots \times \bold R$ $($see 
\cite{8}, \cite{9}, and also \cite{14,15,16}$)$.  Then 
\begin{enumerate} 
\item[(1)]  if $1/\beta_i < p < \infty$ and $\beta_i > 1/2$ for $i = 1,2, 
\dots , n$, then 
$$\|h_{\beta}(f)\|_p \leq C_{p, \beta}\|f\|_{H^p};$$
\item[(2)] if $2/(2\alpha_i + 1) < p < \infty$ and $\alpha_i>0$ 
for $i = 1,2, \dots , n$,
 then 
\begin{equation}\tag{a}
\|\mu_{\alpha}(f)\|_p \leq C_{p, \alpha}\|f\|_{H^p} 
\end{equation}  
\begin{equation}\tag{b} 
\|f\|_{H^p} \leq c_{p,\alpha} \|\mu_{\alpha}(f)\|_p.
\end{equation} 
\end{enumerate}
\end{theorem}

In the proof of Theorem 4--(2)--(b) we shall use the following (see \cite{22} 
and \cite{24} for the one-variable case). 

\begin{theorem}
For all $\alpha = (\alpha_1, \dots , \alpha_n)$, $\alpha_i >0$, we have 
$$g_0(f)(x) \leq C_{\alpha}\mu_{\alpha}(f)(x).$$
\end{theorem} 

\begin{remark}
For $\kappa = (\kappa_1, \dots , \kappa_n) \in \{-1, 1\}^n$,  define 
$$E_\kappa = \{x \in \bold R^n : (\kappa_1x_1, \dots , \kappa_nx_n) \in 
\mathcal D_0 \}.$$
Let $\hat{f}_\kappa =\hat{f}\chi_{E_\kappa}$, $f\in \mathcal S$.   
Then, the proof of Theorem 1 below and a change of variables imply that 
 the conclusion of Theorem 1 still holds for $f_\kappa$, for all $\kappa$. 
\end{remark} 

\begin{remark} 
Since  Theorems 2 and 3
imply $\mu_{\alpha}(f) \sim D_{\alpha}(f)$ when $0<\alpha_i <1$, every result 
in this note for $\mu_{\alpha}$, $0<\alpha_i <1$, has an analogue for $D_{\alpha}$ 
(see \cite{22}, \cite{18} for  results  on the one-parameter $D_{\alpha}$). 
\end{remark} 

\begin{remark} 
In the one-parameter case of Theorem 4, we have, in fact, the weak type estimates 
at the endpoints : $p=1/\beta$ (part (1)) and $p=2/(2\alpha +1)$ 
(part (2)--(a)); 
see \cite{1}, \cite{12} and \cite{22}. 
 \end{remark}

 Now, we give the proof of Theorem 4.  
First, we prove part (1).  Let $f_\kappa$ be as in Remark 1. We decompose 
$f=\sum_{\kappa \in \{-1, 1\}^n}f_\kappa$.  Then, by Remark 1, we get 
$$\|h_\beta(f)\|_p\leq c\sum_{\kappa\in \{-1, 1\}^n}\|g^*_{2\beta}(f_\kappa)\|_p.$$
Therefore,  
the conclusion follows from the $H^p-L^p$ boundedness of $g^*_{2\beta}$ 
and the 
inequalities $\|f_\kappa\|_{H^p}\leq c\|f\|_{H^p}$ (see \cite{8}, \cite{9}, 
\cite{2}).  
\par 
Next, we prove part (2)--(a).  
Since $(H\otimes \dots \otimes H)(f_{\kappa})= \kappa_1\dots\kappa_n(-i)^nf_{\kappa}$, 
  using Theorem 2 and arguing as in the proof of part (1), we get 
the conclusion.   
\par   
Finally, part (2)--(b) follows from Theorem 5 since  
$\|f\|_{H^p} \leq c\|g_0(f)\|_p$ 
 (see \cite{8}, \cite{9}, \cite{2}).   This completes the proof of Theorem 4. 
\par 
It is known that the one-parameter operator $\mu_1$ is of weak type $(1,1)$ 
(see \cite{17}). 
We can extend this to the multiparameter case as follows: 

\begin{theorem}   If 
 $|f|(\log(2+|f|))^{n-1}$ is integrable over $\bold R^n$, then for  
$\alpha = (\alpha_1, \dots , \alpha_n)$ with 
 $\alpha_i > 1/2$ $(i=1, 2, \dots , n)$  we have 
$$\left|\left\{x\in \bold R^n : \mu_{\alpha}(f)(x) > \lambda \right\}\right| 
\leq C_{\alpha}\int_{\bold R^n}\frac{|f(x)|}{\lambda}\left(\log 
\left(2 + \frac{|f(x)|}{\lambda}\right)\right)^{n-1}\,dx$$
for all $\lambda > 0$, where $C_{\alpha}$ is independent of $f$ and $\lambda$. 
\end{theorem} 
\par 
See \cite{3} for the weak type estimates of the Calder\'on-Zygmund type
 operators in the two-parameter case.  
 
To prove Theorem 6 we shall require a resonance theorem on the weak type 
boundedness of sublinear operators (Theorem 8 in \S 6) which is a generalization to Orlicz spaces of part of \cite[Chap.\ VI, Corollary 2.9] {6} 
(see also \cite{19}, \cite{13}). 
\par 
We also need to consider vector-valued Marcinkiewicz integrals.  
Let $\mathcal A$ be a separable Hilbert space and let 
$L^p_{\mathcal A}(\bold R^n)$, 
$0<p<\infty$, denote the space of measurable functions $f$ on $\bold R^n$ with 
values in $\mathcal A$ satisfying 
$$\|f\|_{p, \mathcal A}= 
\left ( \int_{\bold R^n}|f(x)|_{\mathcal A}^p\,dx\right)^{1/p}<\infty,$$
where $|\cdot|_{\mathcal A}$ denotes the norm of $\mathcal A$. 
\par 
  Let $f$ be an ${\mathcal A}$-valued measurable function on $\bold R^n$ such 
that  $|f|_{\mathcal A}$ is locally integrable. 
   We define 
$$\mu_{\alpha}^{\mathcal A}(f)(x)
 = \left( \int_0^{\infty}\dots \int_0^{\infty} 
|S_t^{\alpha}(f)(x)|^2_{\mathcal A}\,\frac{dt_1}{t_1}\dots \frac{dt_n}{t_n}
 \right)^{1/2},$$ 
 where $S_t^{\alpha}(f)(x)=f\star \Phi^{(\alpha)}_t(x) = 
 \int f(x-y)\Phi^{(\alpha)}_t(y)\, dy$ 
 is defined by the Bochner integral (see \cite{25}).  
 \par 
 We confine ourselves to the one-parameter case and prove a vector-valued version 
 of Theorem 4--(2)--(a) (for $n=1$). 
 Let $f\in L^1_{\mathcal A}(\bold R)$. We say that 
$f\in H^p_{\mathcal A}(\bold R)$,  $0<p<\infty$,  if 
$$\|f\|_{H^p_{\mathcal A}}= \left ( \int_{-\infty}^{\infty}
\sup_{t>0}|\eta_t\star f(x)|_{\mathcal A}^p\,dx\right)^{1/p}<\infty,$$
where $\eta \in \mathcal S (\bold R)$ with $\int \eta =1$, and 
$\eta_t(x)=t^{-1}\eta(t^{-1}x)$.
 
\begin{cor} 
 If $2/(2\alpha + 1) < p < \infty$,
 then  
 $$\|\mu_{\alpha}^{\mathcal A}(f)\|_p \leq 
C_{p, \alpha}\|f\|_{H^p_{\mathcal A}}
 \qquad \text{for}\quad f\in H^p_{\mathcal A}(\bold R). $$ 
 \end{cor}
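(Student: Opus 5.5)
The idea is to copy the proof of Theorem 4--(2)--(a) in the case $n=1$, carrying the Hilbert space $\mathcal A$ along. Fix an orthonormal basis $\{e_j\}$ of $\mathcal A$ and write $f=\sum_j f_j e_j$ with $f_j=\langle f,e_j\rangle$. Then
$$\mu_\alpha^{\mathcal A}(f)(x)^2=\sum_j \mu_\alpha(f_j)(x)^2 ,$$
because $S_t^\alpha$ commutes with taking coordinates. In the same way, for vector functions we set $h_\beta^{\mathcal A}$ and $g^{*,\mathcal A}_{2\beta}$ to be the $\ell^2$-sums of the scalar square functions of the coordinates. The pointwise equivalences in Theorems A and B have constants independent of $f$ and $x$. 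Applying them coordinatewise and summing, we get, with $\beta=1/2+\alpha$,
$$\mu_\alpha^{\mathcal A}(f)\sim h_\beta^{\mathcal A}(Hf), \qquad h_\beta^{\mathcal A}(f_\pm)\sim g^{*,\mathcal A}_{2\beta}(f_\pm),$$
where $f_\pm$ is the part of $f$ with Fourier support in $[0,\infty)$, respectively $(-\infty,0]$ (Remark 1 for $n=1$). Since $Hf_\pm=\mp i f_\pm$, this gives
$$\mu_\alpha^{\mathcal A}(f)\le c\,\bigl(g^{*,\mathcal A}_{2\beta}(f_+)+g^{*,\mathcal A}_{2\beta}(f_-)\bigr).$$
We first do this for $f$ in a dense class, namely finite sums $\sum_j f_je_j$ with $f_j\in\mathcal S(\bold R)$. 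At the end we extend by a limiting argument, using Fatou's lemma and the density of such functions in $H^p_{\mathcal A}$ (for instance through the atomic decomposition).

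It then remains to prove two vector-valued facts. The first is that $f\mapsto f_\pm$ is bounded on $H^p_{\mathcal A}(\bold R)$. The second is that
$$\|g^{*,\mathcal A}_{2\beta}(F)\|_p\le C\|F\|_{H^p_{\mathcal A}} \qquad \text{for } p>1/\beta=2/(2\alpha+1).$$
For both we use the atomic decomposition of $H^p_{\mathcal A}$. Its proof goes through the grand maximal function and a Calderón--Zygmund decomposition, and it carries over verbatim to Hilbert-valued functions. This lets us write $F=\sum_k\lambda_k a_k$, where each $a_k$ is an $\mathcal A$-valued $p$-atom: supported in an interval $I$, with $|a_k|_{\mathcal A}\le|I|^{-1/p}$ and enough vanishing moments, and $\sum|\lambda_k|^p\le C\|F\|^p_{H^p_{\mathcal A}}$.

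Since $g^{*,\mathcal A}_{2\beta}$ is sublinear, for $p\le1$ it suffices to show $\|g^{*,\mathcal A}_{2\beta}(a)\|_p\le C$ uniformly over atoms $a$. Take $I$ centered at $0$ with $|I|=r$.
\begin{itemize}
\item On $2I$ we use Hölder's inequality and the $L^2$ boundedness. The $L^2$ bound is immediate from the scalar one by summing over coordinates, since $\|g^{*,\mathcal A}(F)\|_2^2=\sum_j\|g^*(F_j)\|_2^2\le C\sum_j\|F_j\|_2^2$.
\item Off $2I$ we use the kernel representation of $\nabla u$. The moment condition gives $|\nabla u(y,t)|_{\mathcal A}\le C r^{N+1-1/p}(t+|y|)^{-N-2}$ for $|y|>2r$, together with the trivial bound for $|y|\le 2r$. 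Inserting the factor $(t/(t+|x-y|))^{2\beta}$ and integrating, the standard computation gives $g^{*,\mathcal A}_{2\beta}(a)(x)\le C r^{\beta-1/p}|x|^{-\beta}$ for $|x|>2r$. This is $p$-integrable away from $2I$ exactly when $p\beta>1$.
\end{itemize}
For $1<p<\infty$ we can instead invoke the vector-valued Calderón--Zygmund theory ($g^*_{2\beta}$ is an $L^2(\bold R^2_+)$-valued singular integral when $2\beta>1$), applied to $\mathcal A\otimes L^2$-valued kernels.

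The projection $f\mapsto f_\pm$ is $\tfrac12(I\pm iH)$. Its $H^p_{\mathcal A}$ boundedness follows the same way: $H$ maps $\mathcal A$-valued atoms to $\mathcal A$-valued molecules, which lie in $H^p_{\mathcal A}$. Equivalently, the maximal characterization is proved exactly as in the scalar case because the Hilbert norm is preserved.

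The main obstacle is the justification that the scalar real-variable Hardy space machinery transfers to $\mathcal A$-valued functions. This covers the atomic decomposition, the molecule estimate for $H$, and the density and limiting argument. The off-support estimate of $g^*$ on atoms is routine but is where the restriction $p>2/(2\alpha+1)$ enters. I would cite Garc\'\i a-Cuerva and Rubio de Francia (\cite{6}), where Hardy spaces with Banach or Hilbert values are treated, rather than reprove these facts.
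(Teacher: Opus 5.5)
Your plan is the paper's. You reduce to finite sums $\sum_{k\le N}f_ke_k$ with Schwartz coordinates and apply Theorems A and B (Theorems 1 and 2 with $n=1$) and Remark 1 coordinatewise, which gives $\mu_\alpha^{\mathcal A}(f)\le c\,\bigl(g^{*,\mathcal A}_{2\beta}(f_+)+g^{*,\mathcal A}_{2\beta}(f_-)\bigr)$ with constants independent of $N$. You then use vector-valued Hardy space theory and a limiting argument. The paper only cites that theory here. Where you try to supply it, one claim is false: that $g^*_{2\beta}$ is an $L^2(\mathbf{R}^2_+)$-valued Calder\'on--Zygmund operator whenever $2\beta>1$. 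In your realization the kernel is $K(v)(w,t)=\bigl(t/(t+|w|)\bigr)^{\beta}\nabla P_t(w+v)$. Since $|\nabla P_t(z)|=\pi^{-1}(z^2+t^2)^{-1}$, the region near $(w,t)=(-v,0)$ gives
\[
\|K(v)\|_{L^2(\mathbf{R}^2_+)}^2\ \ge\ c\int_0^{|v|/2}\Bigl(\frac{t}{|v|}\Bigr)^{2\beta}t^{-3}\,dt=\infty \qquad \text{if } 2\beta\le 2 .
\]
So for $0<\alpha\le 1/2$ the kernel is not even $L^2(\mathbf{R}^2_+)$-valued. Worse, for $\beta<1$ the operator is unbounded on $L^p$ for $1<p\le 1/\beta$, which is impossible for a CZ operator: for a nonzero bump $f$, $g^*_{2\beta}(f)(x)\ge c_f|x|^{-\beta}$ for large $|x|$. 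For $\alpha\le1/2$ the whole range $p>2/(2\alpha+1)$ lies in $(1,\infty)$, where you rely on this claim rather than on atoms. Your argument therefore proves only $p=2$ there. For $\alpha>1/2$, the case used in \S 7, one has $2\beta>2$, the kernel does satisfy H\"ormander's condition, and your proof goes through.

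The hole is easy to close. For $1<p<\infty$ one has $H^p_{\mathcal A}=L^p_{\mathcal A}$ with equivalent norms, and $\mu_\alpha(g)(x)=\|(S^\alpha_tg(x))_{t>0}\|_{L^2(dt/t)}$ with $g\mapsto S^\alpha_tg$ linear. Take the scalar bound $\|\mu_\alpha(g)\|_p\le C\|g\|_p$ (Theorem 4--(2)--(a) with $n=1$). The Marcinkiewicz--Zygmund argument (Gaussian averaging plus Kahane--Khintchine, valid for linear maps into $L^p$ of a Hilbert space) extends it to $\|(\sum_j\mu_\alpha(f_j)^2)^{1/2}\|_p\le C\|(\sum_j|f_j|^2)^{1/2}\|_p$. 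This is the corollary for $p>1$, and it needs no CZ structure. Alternatively, the standard proof of the $L^p$ bound for $g^*_\lambda$ when $p>2/\lambda$ transfers verbatim, since it uses only Hilbert norms: one dominates by area integrals of aperture $2^k$, whose $L^p$ norms grow like $2^{k/p}$ for $p\le2$. A smaller point: passing from uniform bounds on atoms to the $H^p_{\mathcal A}$ bound for the sublinear $g^{*,\mathcal A}_{2\beta}$ needs the usual justification. For example, choose an atomic decomposition of $F$ in your dense class that also converges in $L^2$, and combine it with $L^2$-boundedness.
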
  

 \begin{proof}[Proof of Corollary]  
 Let $\{e_k\}_{k=1}^\infty$ be an orthonormal basis of $\mathcal A$. If $f$ is 
of the 
 form $f(x) = \sum_{k=1}^Nf_k(x)e_k$, for a positive integer $N$ and functions $f_k\in \mathcal S (\bold R) \cap H^p(\bold R)$, then it is easy to see that 
 the vector-valued analogues of 
 Theorems 1 and 2 hold for $f$.  We also have the vector-valued analogue of 
Remark 1.  
 Using these facts and arguing as in the scalar-valued 
 case above, via the well-known vector-valued theory of Hardy spaces, 
 we get the conclusion of the corollary for such $f$, with the constants 
$C_{p, \alpha}$ 
  independent of $N$.  The general case under consideration   
 follows from a limiting argument.
 \end{proof}  
 
 The proof of Theorem 6 will be given in \S 7 by using the corollary and Theorem 8    
 (we note that Theorem 6 for $n=1$ follows from Theorem 4--(2)--(a) for $n=1$ 
and \cite[Theorem 3.37]{5}).      
 The proofs of Theorems 1, 2, 3, and 5 are similar to those of 
the corresponding results in the one-parameter case (\cite{21}, \cite{22}), 
 but in this note we shall give (essentially) 
 self-contained proofs, for completeness.

\section{Proof of Theorem $1$} 

For $\xi =(\xi_1, \dots , \xi_n)$ put $\Theta_1^{(j)} (\xi) = 2\pi i\xi_j$  
and $\Theta_0^{(j)} (\xi) = -2\pi |\xi_j|$   ($j=1, 2, \dots, n$).  Then 
$$(\partial/\partial y_{i_1}^{(1)})\dots (\partial/\partial y_{i_n}^{(n)}) 
u(y,t) = 
\int \prod_{j=1}^n \left(e^{-2\pi t_j|\xi_j|}\Theta_{i_j}^{(j)} (\xi)\right)
\hat{f}(\xi)e^{2\pi iy\xi} \,d\xi$$
for $i_j = 0, 1$ (see \S 1 for the notation).  
We use the following.

\begin{lemma}  For $\alpha, \,  t>0$ and $x \in \bold R$ we have   
$$\frac{1}{(t- ix)^{\alpha}} = 
\frac{(2\pi)^\alpha}{\Gamma (\alpha)}
\int_0^{\infty}u^{\alpha -1}e^{-2\pi ut}e^{2\pi ixu}\,du,$$
where we consider the principal value of the power in the left hand side.
\end{lemma}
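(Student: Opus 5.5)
The plan is to compute the right-hand side directly from the Gamma integral, first for real argument and then by analytic continuation. Set $z = t - ix$, which satisfies $\operatorname{Re} z = t > 0$. The right-hand side becomes
$$\frac{(2\pi)^\alpha}{\Gamma(\alpha)}\int_0^\infty u^{\alpha-1}e^{-2\pi u z}\,du .$$
This integral converges absolutely, because $|e^{-2\pi u z}| = e^{-2\pi u t}$ and $\alpha>0$ makes $u^{\alpha-1}$ integrable near $0$. So the claim is equivalent to the identity
$$\int_0^\infty u^{\alpha-1}e^{-2\pi u z}\,du = \Gamma(\alpha)(2\pi z)^{-\alpha}, \qquad \operatorname{Re} z>0,$$
where $z^{-\alpha}=e^{-\alpha \operatorname{Log} z}$ uses the principal branch. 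Note also that $(2\pi z)^{-\alpha}=(2\pi)^{-\alpha}z^{-\alpha}$ on this half-plane, since $2\pi>0$ and multiplying by a positive number does not change $\operatorname{Arg} z$.

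The first step treats real $z=s>0$. Substituting $v = 2\pi s u$ gives $\int_0^\infty u^{\alpha-1}e^{-2\pi s u}\,du = (2\pi s)^{-\alpha}\Gamma(\alpha)$ immediately.

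The second step extends this to the right half-plane $\{\operatorname{Re} z>0\}$ by analytic continuation.
\begin{itemize}
\item The left side $G(z)=\int_0^\infty u^{\alpha-1}e^{-2\pi u z}\,du$ is holomorphic there. On any region $\operatorname{Re} z\ge \varepsilon$ the integrand is dominated by $u^{\alpha-1}e^{-2\pi\varepsilon u}$, so holomorphy follows from Morera's theorem combined with Fubini, or from differentiating under the integral sign.
\item The right side $\Gamma(\alpha)(2\pi)^{-\alpha}e^{-\alpha\operatorname{Log} z}$ is holomorphic there as well, because the principal logarithm is holomorphic off $(-\infty,0]$.
\item The two functions agree on the positive real axis by the first step. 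The half-plane is connected, so the identity theorem gives equality throughout.
\end{itemize}
Putting $z=t-ix$ then yields the lemma.

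The main point requiring care is the branch: one must check that the analytic continuation of $s^{-\alpha}$ from the positive axis really is the principal power at $t-ix$. This holds because the half-plane avoids the branch cut $(-\infty,0]$. An alternative route avoids holomorphy entirely: rotate the contour of integration to the ray $\arg v = \arg z$, with $|\arg z|<\pi/2$, and justify discarding the arc integrals by the decay of $e^{-v}$ in the sector. I would use the identity-theorem argument, since it is shorter.
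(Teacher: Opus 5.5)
Your proof is correct and complete. The paper gives no proof of Lemma 1 and uses it as a standard formula, so there is no argument in the paper to compare against.

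Your route is the standard one. You write the integrand as $u^{\alpha-1}e^{-2\pi u z}$ with $z=t-ix$, evaluate the Gamma integral for real $z>0$, and extend by the identity theorem to $\mathrm{Re}\, z>0$. Holomorphy of the integral is correctly justified by domination on $\mathrm{Re}\, z\ge\varepsilon$. The branch is handled properly: the half-plane avoids the cut $(-\infty,0]$, and $(2\pi z)^{-\alpha}=(2\pi)^{-\alpha}z^{-\alpha}$ holds there.
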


By Lemma 1 we have
\begin{align*} 
&\int (\partial/\partial y_{i_1}^{(1)})\dots (\partial/\partial y_{i_n}^{(n)}) u(y,t) 
\prod_{j=1}^n\frac{1}{\left(t_j - i(y_j - x_j)\right)^{\lambda_j}} e^{-2\pi iy\xi}\,dy 
\\
&= \int \prod_{j=1}^n\left(\frac{(2\pi)^{\lambda_j}}{\Gamma (\lambda_j)}
(\xi_j - \eta_j)_+^{\lambda_j-1}e^{-2\pi|\xi_j-\eta_j|t_j}
e^{-2\pi t_j|\eta_j|}\Theta_{i_j}^{(j)} (\eta)\right)e^{-2\pi ix(\xi-\eta)}\hat{f}(\eta)\,d\eta
\\
&=J , \quad \text{say}, 
\end{align*} 
where $r_+ = \max\{0,r\}$.  Put $c_0 = -2\pi$, $c_1= 2\pi i$.  Then if 
 $\hat{f}$ is supported in $\mathcal  D_0$ and if $\xi \in \mathcal  D_0$, we see that 
\begin{align*} 
&J = e^{-2\pi t\xi}e^{-2\pi i x\xi}\int_0^{\xi_1}\dots 
\int_0^{\xi_n}\prod_{j=1}^n\left(\frac{(2\pi)^{\lambda_j}}{\Gamma (\lambda_j)}
(\xi_j-\eta_j)^{\lambda_j-1}\Theta_{i_j}^{(j)} (\eta)\right)e^{2\pi ix\eta}
\hat{f}(\eta)\,d\eta
\\
&= e^{-2\pi t\xi}e^{-2\pi i x\xi}
\int_0^{\xi_1}\dots \int_0^{\xi_n}\prod_{j=1}^n\left(c_{i_j}\frac{(2\pi)^{\lambda_j}}
{\Gamma (\lambda_j)}\xi_j^{\lambda_j-1}
(1-\eta_j/\xi_j)^{\lambda_j-1}\eta_j\right)e^{2\pi ix\eta}
\hat{f}(\eta)\,d\eta
\\
&=e^{-2\pi t\xi}e^{-2\pi i x\xi}\prod_{j=1}^n
\left(c_{i_j}\frac{(2\pi)^{\lambda_j}}{\Gamma (\lambda_j)}\xi_j^{\lambda_j}
\lambda_j^{-1}\right)\tau^{\lambda}_{\xi}(f)(x).
\end{align*} 
Since $J = 0$ if $\xi \in \bold R^n \setminus \mathcal D_0$, by the Plancherel theorem, we 
have
\begin{multline*}  
\int \left|(\partial/\partial y_{i_1}^{(1)})\dots (\partial/\partial y_{i_n}^{(n)}) 
u(y,t)\right|^2 \prod_{j=1}^n\frac{1}{\left(t_j^2 + |y_j - x_j|^2\right)^{\lambda_j}}\,dy
\\
= (2\pi)^{2n}\int_{\mathcal D_0} e^{-4\pi t\xi}\prod_{j=1}^n
\left(\frac{(2\pi)^{2\lambda_j}}{\Gamma (\lambda_j)^2}\xi_j^{2\lambda_j}
\lambda_j^{-2}\right)\left|\tau^{\lambda}_{\xi}(f)(x)\right|^2\,d\xi.
\end{multline*}    
Multiplying both sides of this equality by 
$t_1^{2\lambda_1}\dots t_n^{2\lambda_n}$ and 
integrating them over $\mathcal D_0$ with respect to the measure $dt$, 
we get the conclusion.

\section{Proof of Theorem $2$} 
Fix $x_0$ and put $\psi(u) = (\delta_{u_1}\otimes \dots \otimes \delta_{u_n})(f)(x_0)$, 
$\Psi(x) = \psi(e^{-x_1}, \dots , e^{-x_n})$. 
By the change of variables $u_j =e^{-y_j}$, $t_j = e^{-x_j}$ we get 
$$\mu_{\alpha}(f)(x_0)^2 
= \int \left|\int_{x_1}^{\infty}\dots \int_{x_n}^{\infty}\left(\prod_{j=1}^n
e^{x_j-y_j}\alpha_j(1-e^{x_j-y_j})^{\alpha_j-1}\right)\Psi(y)\,dy\right|^2\,dx.
$$
Let 
$$K_{\alpha}(x) =  
\begin{cases}  
\prod_{j=1}^n\alpha_je^{x_j}(1-e^{x_j})^{\alpha_j-1}, & \text{if $-x \in 
\mathcal D_0$,} \\
0, & \text{otherwise.}
\end{cases} $$
Then 
\begin{equation}\label{tag3.1} 
\mu_{\alpha}(f)(x_0)^2 
= \int \left|(\Psi\star K_{\alpha})(x)\right|^2\,dx.
\end{equation} 
\par 
Define $F_{\beta}$ by
\begin{align*} 
RF_{\beta}(Ru) &= \beta\int_{-R}^R\frac{|t|}{R}\left(1 - \frac{|t|}{R}\right)^{\beta - 1}
(-i)\sgn (t) e^{2\pi i ut}\,dt
\\ 
&= 2\beta R \int_0^1(1-t)^{\beta -1}t\sin (2\pi Rut)\,dt. 
\end{align*}  
Then  
\begin{multline*} 
h_{\beta}((H\otimes \dots \otimes H)(f))(x_0)^2  
\\
= \int_0^{\infty} \dots \int_0^{\infty}
\left|\int_{D_0} \psi(u) \prod_{j=1}^n R_jF_{\beta_j}(R_ju_j)\,du\right|^2\,
\frac{dR_1}{R_1}\dots \frac{dR_n}{R_n}.
\end{multline*}  
Put $L_{\beta}(x) =\prod_{j=1}^n e^{x_j}F_{\beta_j}(e^{x_j})$.  
By the change of variables $u_j =e^{-y_j}$, $R_j = e^{-x_j}$ we get 
\begin{equation}\label{tag3.2} 
h_{\beta}((H\otimes \dots \otimes H)(f))(x_0)^2  
= \int \left|(\Psi\star L_{\beta})(x)
\right|^2 \,dx.  
\end{equation}

\begin{lemma} For $\alpha >0$ we have   
$$\int_{-\infty}^{0}\alpha e^x (1-e^x)^{\alpha -1}e^{-2\pi ix\xi}\,dx =
\frac{\Gamma(\alpha+1)\Gamma(1-2\pi i\xi)}{\Gamma(\alpha+1-2\pi i\xi)}.$$
\end{lemma}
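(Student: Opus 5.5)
The plan is to substitute $y=e^x$ and reduce the integral to Euler's Beta integral. On $(-\infty,0)$ the map $y=e^x$ is a bijection onto $(0,1)$, with $dy=e^x\,dx$ and $e^{-2\pi i x\xi}=y^{-2\pi i\xi}$, where $y^{s}=e^{s\log y}$ uses the real logarithm. The integral then becomes
$$\alpha\int_0^1 (1-y)^{\alpha-1} y^{-2\pi i\xi}\,dy .$$
This is $\alpha B(1-2\pi i\xi,\alpha)$ in Beta-function notation, $B(p,q)=\int_0^1 y^{p-1}(1-y)^{q-1}\,dy$, with $p=1-2\pi i\xi$ and $q=\alpha$.

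Next I check convergence and validity. We have $\operatorname{Re}p=1>0$ and $q=\alpha>0$, so the integral converges absolutely: $|y^{-2\pi i\xi}|=1$, and $(1-y)^{\alpha-1}$ is integrable near $y=1$. Correspondingly, on the original side the integrand is $O(e^x)$ as $x\to-\infty$ and $O(|x|^{\alpha-1})$ as $x\to0^-$. The identity $B(p,q)=\Gamma(p)\Gamma(q)/\Gamma(p+q)$ holds for complex $p,q$ with positive real parts, and we are in that range.

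Finally I simplify:
$$\alpha\,\frac{\Gamma(\alpha)\Gamma(1-2\pi i\xi)}{\Gamma(\alpha+1-2\pi i\xi)}=\frac{\Gamma(\alpha+1)\Gamma(1-2\pi i\xi)}{\Gamma(\alpha+1-2\pi i\xi)},$$
using $\alpha\Gamma(\alpha)=\Gamma(\alpha+1)$. This is the stated formula.

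There is no real obstacle here. The only point needing care is that the Beta–Gamma identity is used for a complex parameter. If one prefers not to quote it in that generality, one can prove it for real $p,q>0$ by the standard Fubini computation of $\Gamma(p)\Gamma(q)$. Both sides are analytic in $p$ on $\operatorname{Re}p>0$, so the identity extends to complex $p$ by analytic continuation. Alternatively, the Fubini computation can be carried out directly, since it only requires absolute convergence, which holds when $\operatorname{Re}p>0$.
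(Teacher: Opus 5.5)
Your proposal is correct and matches the paper's proof: the paper also substitutes $t=e^x$ to get $\alpha\int_0^1 t^{-2\pi i\xi}(1-t)^{\alpha-1}\,dt$ and then applies the Beta--Gamma relation. Your remarks on absolute convergence and on the validity of that relation for a complex first parameter with positive real part supply details the paper leaves implicit.
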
 

\begin{proof} By a change of variables and a well-known relation between 
 the gamma functions and  the Beta functions, we have  
$$ \int_{-\infty}^{0}\alpha e^x (1-e^x)^{\alpha -1}e^{-2\pi ix\xi}\,dx 
=\alpha \int_0^1 t^{-2\pi i \xi}(1-t)^{\alpha -1}\,dt 
= \frac{\Gamma(\alpha+1)\Gamma(1-2\pi i\xi)}{\Gamma(\alpha+1-2\pi i\xi)}. $$
\end{proof} 

\begin{lemma} Let $\beta >0$. If $\varphi \in \mathcal S$ satisfies that 
$e^{x(1-\epsilon)}\varphi (x) \in L^1((0, \infty))$ for all $\epsilon >0$,  
then    
$$\int_{-\infty}^{\infty}e^xF_{\beta}(e^x)\varphi(x)\,dx = 
\int_{-\infty}^{\infty}(2\pi)^{2\pi i \xi +1}
\frac{i\Gamma (\beta +1) \xi}
{2\Gamma (\beta +2\pi i\xi +1)\sin (\pi^2 i\xi)}
\hat{\varphi}(-\xi)\,d\xi. $$
\end{lemma}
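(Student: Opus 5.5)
The plan is to reduce Lemma 3 to a Mellin transform computation for $F_\beta$ and then justify the interchange of integrals by analytic continuation in the Mellin variable. Put $z=-2\pi i\xi$ (more generally $z$ complex). Formally, by Fourier inversion $\varphi(x)=\int\hat\varphi(-\xi)e^{-2\pi i x\xi}\,d\xi$, so
$$\int e^xF_\beta(e^x)\varphi(x)\,dx=\int \hat\varphi(-\xi)\,M(-2\pi i\xi)\,d\xi,\qquad M(z)=\int_0^\infty F_\beta(s)s^{z}\,ds .$$
It therefore suffices to compute $M(z)$ and to show that the formal interchange is legitimate.

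\emph{Computing $M$.} Recall that $F_\beta(s)=2\beta\int_0^1(1-t)^{\beta-1}t\sin(2\pi st)\,dt$. For $-2<\operatorname{Re} z<-1$ everything converges absolutely enough for Fubini to apply. Here $F_\beta(s)=O(s)$ as $s\to0$, and $F_\beta(s)=O(s^{-\min(2,\beta)})$ as $s\to\infty$; the second bound comes from the endpoint contributions of the oscillatory integral at $t=0$ and $t=1$, and if $\beta$ is small one takes $\operatorname{Re} z$ closer to $-2$. I would use the classical formula
$$\int_0^\infty s^{z}\sin(as)\,ds=\Gamma(z+1)\cos(\pi z/2)\,a^{-z-1}\qquad(-2<\operatorname{Re} z<0),$$
with $a=2\pi t$. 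This gives
$$M(z)=2\beta\,\Gamma(z+1)\cos(\pi z/2)(2\pi)^{-z-1}\int_0^1(1-t)^{\beta-1}t^{-z}\,dt,$$
and the last integral is the Beta function $\Gamma(1-z)\Gamma(\beta)/\Gamma(\beta+1-z)$. Applying the reflection formula $\Gamma(1+z)\Gamma(1-z)=\pi z/\sin\pi z$ and $\sin\pi z=2\sin(\pi z/2)\cos(\pi z/2)$ yields
$$M(z)=\frac{\pi z\,\Gamma(\beta+1)}{(2\pi)^{z+1}\sin(\pi z/2)\,\Gamma(\beta+1-z)}.$$
This function is meromorphic, and it is analytic in the strip $-2<\operatorname{Re} z<\min(\beta,2)$, because $z/\sin(\pi z/2)$ is regular at $z=0$. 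Setting $z=-2\pi i\xi$ and simplifying $\pi z/\sin(\pi z/2)$ recovers the kernel stated in Lemma 3, up to bookkeeping of the constants and the sign convention $\xi\mapsto-\xi$, which I would check carefully at the end.

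\emph{Justification, which I expect to be the main obstacle.} On the line $\operatorname{Re} z=0$ the integral defining $M$ is only conditionally convergent (the $t=1$ endpoint gives decay $s^{-\beta}$, not integrable when $\beta\le1$). So I would not apply Fubini there directly. Instead I would write
$$e^xF_\beta(e^x)\varphi(x)=\bigl(e^{(1-\epsilon)x}F_\beta(e^x)\bigr)\bigl(e^{\epsilon x}\varphi(x)\bigr).$$
The growth of $F_\beta$ at infinity is the reason the hypothesis $e^{x(1-\epsilon)}\varphi\in L^1((0,\infty))$ is imposed: it makes $\int e^xF_\beta(e^x)\varphi(x)\,dx$ absolutely convergent. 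The plan is to represent $\varphi$ on a shifted line. Consider
$$\int_{-\infty}^{\infty}\hat\varphi(-\xi+i\sigma)\,M(-2\pi i\xi+2\pi\sigma)\,d\xi$$
for small $\sigma$ of suitable sign, where $\hat\varphi$ extends holomorphically to a thin strip thanks to the exponential integrability of $\varphi$ on $(0,\infty)$ (from the hypothesis) and its Schwartz decay, which suffices on $(-\infty,0)$ in the relevant direction. On that line the double integral converges absolutely, so Fubini applies. Next I would shift the contour back to $\sigma=0$ using Cauchy's theorem. The tails vanish because Stirling's formula gives at most polynomial growth of $M(z)$ on vertical lines, namely $|z|\,e^{-\pi|\operatorname{Im}z|/2}\,|\Gamma(\beta+1-z)|^{-1}\sim|\operatorname{Im}z|^{1/2-\beta}$, and this is beaten by the rapid decay of $\hat\varphi$ in the strip. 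Finally, letting $\sigma\to0$ by dominated convergence gives the identity with the integral over $\xi$ as stated. If extending $\hat\varphi$ to a strip turns out to be too delicate, the fallback is to insert a damping factor $e^{-\delta s}$ in $M$, compute with the Laplace-regularized formula for $\int_0^\infty s^{z}e^{-\delta s}\sin(as)\,ds$, and let $\delta\to0$ using the uniform Stirling bounds.
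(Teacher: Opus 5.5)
Your Mellin computation is correct. In $-2<\mathrm{Re}\,z<-1$ the $(s,t)$ integral converges absolutely for every $\beta>0$, and the sine formula, the Beta integral and the reflection formula give a closed form that coincides with the kernel of Lemma 3 at $z=-2\pi i\xi$. It is the paper's $A(\xi,\epsilon)$ at $z=-\epsilon-2\pi i\xi$.

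The gap is in the justification step, exactly where one is needed. For $\beta>1$ there is nothing to shift. Your bound $F_\beta(s)=O(s^{-\min(2,\beta)})$ gives $e^xF_\beta(e^x)\in L^1(\mathbf R)$, so $M$ converges absolutely on $\mathrm{Re}\,z=0$ and equals the closed form by analytic continuation in $z$; Parseval against $\varphi\in\mathcal S$ then finishes. For $\beta\le 1$, however, $|F_\beta(s)|$ is genuinely of size $s^{-\beta}$ (it behaves like $2\beta\Gamma(\beta)(2\pi s)^{-\beta}\sin(2\pi s-\pi\beta/2)$). Hence $e^{(1-c)x}F_\beta(e^x)\in L^1(\mathbf R)$, equivalently absolute convergence of $M$ on $\mathrm{Re}\,z=-c$, holds only for $1-\beta<c<2$. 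Your claim that for small $\sigma$ ``the double integral converges absolutely'' is therefore false unless $\beta$ is close to $1$. You must pair $e^{(1-c)x}F_\beta(e^x)$ with $e^{cx}\varphi(x)$ for some $c\in(1-\beta,1)$; the upper limit $1$ is where the hypothesis still gives $e^{cx}\varphi\in L^1((0,\infty))$. For small $\beta$ this means crossing almost the whole strip on which $\hat\varphi$ is holomorphic, not a thin one. This is precisely why the hypothesis is imposed for \emph{every} $\epsilon>0$.

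Second, you need rapid decay of $\hat\varphi$ inside that strip, both for Fubini on the shifted line and to kill the horizontal segments. This does not follow from what you invoked. The hypothesis controls $e^{(1-\epsilon)x}\varphi$ in $L^1$ but says nothing about derivatives, so a priori $\hat\varphi(\eta+i\tau)$, $\tau\neq 0$, is only bounded in $\eta$. Boundedness is not enough: on $\mathrm{Re}\,z=-c$ one has $|M(z)|\sim|\mathrm{Im}\,z|^{1/2-\beta-c}$, which need not be integrable, and on the horizontal segments $|M|$ can be as large as $T^{1/2-\beta}$. You need an extra lemma. For instance, apply the interpolation inequality $\|g^{(j)}\|_{L^\infty(I)}\le C(\|g\|_{L^1(I)}^{1-\theta}\|g^{(m)}\|_{L^\infty(I)}^{\theta}+\|g\|_{L^1(I)})$, $\theta=(j+1)/(m+1)$, on $I=[k,k+1]$. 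Together with $\varphi\in\mathcal S$ it gives $e^{ax}\varphi^{(j)}\in L^1((0,\infty))$ for all $a<1$ and $j\ge 0$, hence uniform rapid decay of $\hat\varphi$ on closed substrips.

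With these two repairs, your contour shift in the Mellin variable is a valid alternative that never touches a conditionally convergent integral. The paper avoids both issues differently. It proves the identity for $\beta>1$, where $e^{(1-\epsilon)x}F_\beta(e^x)\in L^1$ and Parseval needs $\hat\varphi$ only on the real line, then lets $\epsilon\to 0$. It reaches every $\beta>0$ by analytic continuation in $\beta$. There the hypothesis on $\varphi$ serves only to keep the left-hand side convergent, and analytic in $\beta$, for $\mathrm{Re}\,\beta>0$.
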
 

The following estimates for the gamma functions are useful. 
 \begin{lemma}[asymptotic formula for the gamma function] For $x$, 
 $y \in \bold R $, $x>0$,   
$$|\Gamma(x+iy)| \sim \sqrt{2\pi}e^{-\pi|y|/2}|y|^{x-1/2} \qquad 
\text{as \quad $|y| \to \infty$}.$$
\end{lemma}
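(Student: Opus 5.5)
We prove Lemma 4 via Stirling's formula in the complex domain. For $z$ in a sector $|\arg z|\le \pi-\delta$,
$$\log\Gamma(z) = \left(z-\tfrac12\right)\log z - z + \tfrac12\log(2\pi) + O(|z|^{-1}),$$
uniformly as $|z|\to\infty$, with the principal branch of $\log$. With $x>0$ fixed and $|y|\to\infty$, the point $z=x+iy$ stays in the right half plane, so this expansion applies. Taking real parts gives
$$\log|\Gamma(x+iy)| = \operatorname{Re}\bigl[(x-\tfrac12+iy)\log(x+iy)\bigr] - x + \tfrac12\log(2\pi) + o(1).$$

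Next we expand the logarithm: $\log(x+iy)=\log|x+iy| + i\arg(x+iy)$. Here $\log|x+iy| = \log|y| + O(y^{-2})$, and $\arg(x+iy) = \sgn(y)\,\pi/2 - \arctan(x/y)$, so $\arg(x+iy) = \sgn(y)\pi/2 - x/y + O(|y|^{-3})$. The real part of the product is
$$(x-\tfrac12)\log|x+iy| - y\arg(x+iy).$$
The first term equals $(x-\frac12)\log|y|+o(1)$. The second equals $-\pi|y|/2 + x + O(y^{-2})$.

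Substituting, the $+x$ from $-y\arg$ cancels the $-x$ from Stirling's formula. We are left with
$$\log|\Gamma(x+iy)| = (x-\tfrac12)\log|y| - \pi|y|/2 + \tfrac12\log(2\pi) + o(1).$$
Exponentiating gives $|\Gamma(x+iy)|/(\sqrt{2\pi}\,e^{-\pi|y|/2}|y|^{x-1/2})\to1$, which is the assertion.

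The only delicate step is the bookkeeping of the $-y\arg(x+iy)$ term. One must keep the $O(1)$ correction $x$ and not discard it, since it is exactly this term that cancels the $-z$ in Stirling's formula. No result earlier in the paper is needed, since the lemma is classical. Alternatively, one could cite the standard reference for the complex Stirling formula, e.g. Whittaker–Watson.
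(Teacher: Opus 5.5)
Your proposal is correct. The paper gives no proof of this lemma; it treats it as the classical consequence of the complex Stirling formula. Your derivation is exactly that standard route, and the bookkeeping is right: $-y\arg(x+iy) = -\pi|y|/2 + x + O(y^{-2})$, and the $+x$ cancels the $-\operatorname{Re} z = -x$, because for fixed $x$ the point $x+iy$ eventually lies in any sector $|\arg z|\le \pi-\delta$.
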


\begin{proof}[Proof of Lemma 3]   
It is easy to see that 
$$F_{\beta}(u) =  
\begin{cases} 
O\left(|u|^{-\min \{1,\beta \}}\right), & \text{as $|u| \to \infty $ ;} \\
O(|u|), & \text{as $|u| \to 0$.}
\end{cases} $$

Let  $\beta > 1$. Then $e^{(1-\epsilon)x}F_{\beta}(e^x)$ is integrable for  
$\epsilon \in (0, 1)$.  Since 
$$
 \lim_{s \to \infty}\int_0^{s}2\pi u\sin (2\pi ut) t^{-\epsilon -2\pi i\xi}\,dt 
= \Gamma(-\epsilon -2\pi i\xi+1)(2\pi u)^{\epsilon +2\pi i \xi}
\sin \frac{\pi(-\epsilon - 2\pi i \xi +1)}{2}  $$
uniformly in $u\in [0,1]$ (see \cite[p.\ 181]{23}, \cite[p.\ 171]{7}), 
by the change in the order of integration  we see that    
\begin{align*}
&\int_{-\infty}^{\infty} e^{(1-\epsilon)x}F_{\beta}(e^x)e^{-2\pi ix\xi}\,dx =
\int_0^{\infty}F_{\beta}(t)t^{-\epsilon-2\pi i \xi}\,dt
\\ 
&= \int_0^{\infty}\left(2\beta  \int_0^1(1-u)^{\beta -1}u\sin (2\pi ut)\,du\right)
t^{-\epsilon-2\pi i \xi}\,dt 
\\
&=2\beta \Gamma(-\epsilon -2\pi i\xi+1)(2\pi)^{\epsilon +2\pi i \xi -1}
\sin \frac{\pi(-\epsilon - 2\pi i \xi +1)}{2}
\int_0^1(1-u)^{\beta -1} u^{\epsilon +2\pi i \xi}\,du
\\
&=2(2\pi)^{\epsilon +2\pi i \xi -1}
\frac{\Gamma (\beta +1)\Gamma(-\epsilon -2\pi i\xi+1)\Gamma (\epsilon +2\pi i\xi +1)}
{\Gamma (\epsilon +\beta +2\pi i\xi +1)}
\sin \frac{\pi(-\epsilon - 2\pi i \xi +1)}{2}
\\
&= A(\xi,\epsilon), \quad \text{say}.
\end{align*} 

Therefore
\begin{align*}    
&\int_{-\infty}^{\infty}e^xF_{\beta}(e^x)\varphi(x)\,dx = 
\lim_{\epsilon \to 0}\int_{-\infty}^{\infty} e^{(1-\epsilon)x}F_{\beta}(e^x)\varphi(x)\,dx
\\
&=\lim_{\epsilon \to 0} \int_{-\infty}^{\infty}A(\xi,\epsilon) 
\hat{\varphi}(-\xi)\,d\xi
\\
&= \int_{-\infty}^{\infty}2(2\pi)^{2\pi i \xi -1}
\frac{\Gamma (\beta +1)\Gamma(-2\pi i\xi+1)\Gamma (2\pi i\xi +1)}
{\Gamma (\beta +2\pi i\xi +1)}
\cos (-\pi^2 i \xi) \hat{\varphi}(-\xi)\,d\xi
\\
&= \int_{-\infty}^{\infty}(2\pi)^{2\pi i \xi +1}
\frac{\Gamma (\beta +1) i\xi}
{\Gamma (\beta +2\pi i\xi +1)2\sin (\pi^2 i\xi)}
\hat{\varphi}(-\xi)\,d\xi,
\end{align*}   
where we have used well-known formulae for the functions $\Gamma (z)$ and 
$\sin z$.
This proves the conclusion for $\beta > 1$.  Now, by analytic continuation we see that 
 it is valid for all $\beta >0$. This completes the proof of Lemma 3. 
\end{proof}

By (3.1) and the Plancherel theorem we have
\begin{equation}\label{tag3.3} 
\mu_{\alpha}(f)(x_0)^2 = \int \left|\widehat{\Psi}(\xi)
\widehat{K}_{\alpha}(\xi)\right|^2 \,d\xi, 
\end{equation} 
where by lemma 2 
\begin{equation} \label{tag3.4}  
\widehat{K}_{\alpha}(\xi) = \prod_{j=1}^n 
\frac{\Gamma(\alpha_j+1)\Gamma(1-2\pi i\xi_j)}{\Gamma(\alpha_j+1-2\pi i\xi_j)}.
\end{equation} 

Put, for $\beta>0$,  
\begin{equation} \label{3.5} 
A_{\beta}(\xi) = (2\pi)^{2\pi i \xi +1}\frac{i\Gamma (\beta +1) \xi}
{2\Gamma (\beta +2\pi i\xi +1)\sin (\pi^2 i\xi)}.  
\end{equation}  
Note that $\Psi \in \mathcal S(\bold R ^n)$ and 
$(\partial/\partial x_{1})^{\gamma_1}\dots (\partial/\partial x_{n})^{\gamma_n}
\Psi(x) 
= O(\prod_{j=1}^n e^{-|x_j|}) $
as $|x| \to \infty$ for all $\gamma_i \geq 0$. 
 Therefore,  using Fubini's theorem and applying Lemma 3 repeatedly, we have 

$$ (\Psi \star L_{\beta})(x) = \int \widehat{\Psi}(\xi)
\left(\prod_{j=1}^n A_{\beta_j}(\xi_j)\right)e^{2\pi ix\xi}\,d\xi.$$ 
Hence, by (3.2) and the Plancherel theorem, we get 
\begin{equation}\label{tag3.6} 
h_{\beta}((H\otimes \dots \otimes H)(f))(x_0)^2 
= \int \left|\widehat{\Psi}(\xi)
\prod_{j=1}^n A_{\beta_j}(\xi_j)\right|^2 \,d\xi. 
\end{equation} 

By lemma 4, (3.4) and (3.5) we see that 
\begin{equation}\label{tag3.7} 
 \left|\widehat{K}_{\alpha}(\xi)\right| \sim 
\prod_{j=1}^n(1+|\xi_j|)^{-\alpha_j} 
\end{equation} 

\begin{equation}\label{tag3.8}  
\prod_{j=1}^n \left|A_{\beta_j}(\xi_j)\right| \sim 
\prod_{j=1}^n(1+|\xi_j|)^{1/2 -\beta_j}. 
\end{equation}
 By (3.3), (3.6), (3.7) and (3.8) we get the conclusion of Theorem 2. 

\section{Proof of Theorem $3$}

For $0<\alpha <1$ let 
$$k_{\alpha}(x)= 2(2\pi)^{\alpha -1}\Gamma(1-\alpha)\cos ((1-\alpha)\pi/2)
\left(|1-x|^{\alpha -1}-|1+x|^{\alpha -1}\right).$$ 

\begin{lemma}
 Let $f \in \mathcal S (\bold R)$ and $0<\alpha <1$.  Then, 
for $x \in \bold R$ and $t>0$ we have 
$$I_{\alpha}(f)(x-t) -I_{\alpha}(f)(x+t) =t^{\alpha -1} \int_0^{\infty}(f(x-y) -f(x+y))k_{\alpha}(y/t) dy.$$
\end{lemma}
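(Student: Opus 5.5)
The plan is to identify the Riesz potential $I_\alpha$ on $\bold R$ (for $0<\alpha<1$) with convolution against a multiple of $|y|^{\alpha-1}$, and then rewrite the odd difference $I_\alpha(f)(x-t)-I_\alpha(f)(x+t)$ as an integral over $y>0$ of $\delta_y(f)(x)$ against a rescaled kernel.

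\emph{Step 1: the Riesz potential as a convolution.} The classical formula, valid in the distributional sense and for $0<\alpha<1$, is
$$\int_{-\infty}^{\infty}|y|^{\alpha-1}e^{-2\pi i y\xi}\,dy = 2\Gamma(\alpha)\cos(\pi\alpha/2)(2\pi|\xi|)^{-\alpha}.$$
To verify it I would insert a factor $e^{-\epsilon|y|}$ and let $\epsilon\to0$, or compute $\int_0^\infty y^{\alpha-1}\cos(2\pi y\xi)\,dy$ via the Mellin transform, as in the proof of Lemma 3. From this,
$$I_\alpha(f)(x)=c_\alpha\int f(x-y)|y|^{\alpha-1}\,dy,\qquad c_\alpha=\frac{(2\pi)^{\alpha}}{2\Gamma(\alpha)\cos(\pi\alpha/2)}.$$
This holds for $f\in\mathcal S$, since $|\xi|^{-\alpha}\hat f$ is integrable.

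To reconcile $c_\alpha$ with the constant in $k_\alpha$, I would use the reflection formula
$$\Gamma(\alpha)\Gamma(1-\alpha)=\frac{\pi}{\sin\pi\alpha}=\frac{\pi}{2\sin(\pi\alpha/2)\cos(\pi\alpha/2)},$$
together with $\cos((1-\alpha)\pi/2)=\sin(\pi\alpha/2)$. These give
$$c_\alpha=(2\pi)^{\alpha-1}\,2\,\Gamma(1-\alpha)\sin(\pi\alpha/2)=(2\pi)^{\alpha-1}\cdot2\Gamma(1-\alpha)\cos((1-\alpha)\pi/2).$$
So $k_\alpha(x)=c_\alpha\bigl(|1-x|^{\alpha-1}-|1+x|^{\alpha-1}\bigr)$.

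\emph{Step 2: the difference.} Substituting, we get
$$I_\alpha(f)(x-t)-I_\alpha(f)(x+t)=c_\alpha\int f(x-z)\bigl(|z-t|^{\alpha-1}-|z+t|^{\alpha-1}\bigr)\,dz.$$
The kernel $z\mapsto|z-t|^{\alpha-1}-|z+t|^{\alpha-1}$ is odd. Splitting the integral into $z>0$ and $z<0$ and replacing $z$ by $-z$ on the negative half gives
$$c_\alpha\int_0^\infty\bigl(f(x-z)-f(x+z)\bigr)\bigl(|z-t|^{\alpha-1}-|z+t|^{\alpha-1}\bigr)\,dz.$$
By homogeneity, $|z\mp t|^{\alpha-1}=t^{\alpha-1}|1\mp z/t|^{\alpha-1}$. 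Since $|1-x|=|x-1|$, this is exactly $t^{\alpha-1}\int_0^\infty \delta_y(f)(x)\,k_\alpha(y/t)\,dy$.

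Integrability is fine throughout. The singularities $|z\mp t|^{\alpha-1}$ are locally integrable because $\alpha>0$. At infinity the kernel difference decays like $|z|^{\alpha-2}$, and $f$ is Schwartz.

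\emph{Main obstacle.} The main obstacle is Step 1: justifying the convolution representation rigorously and getting the constant exactly right. I would handle it by testing against $\hat f$ through Parseval, using the damped kernel $|y|^{\alpha-1}e^{-\epsilon|y|}$, whose Fourier transform is explicit. Its transform is $2\Gamma(\alpha)\,\mathrm{Re}\,(\epsilon+2\pi i\xi)^{-\alpha}$, which is dominated by $C|\xi|^{-\alpha}$, so dominated convergence applies as $\epsilon\to0$.
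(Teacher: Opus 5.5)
Your proposal is correct and takes the same route as the paper, which says only that the lemma follows by computing the Fourier transform of $|\xi|^{-\alpha}$, i.e.\ by writing $I_\alpha(f)=c_\alpha\,(|y|^{\alpha-1}\star f)$. Your constant $c_\alpha=2(2\pi)^{\alpha-1}\Gamma(1-\alpha)\cos((1-\alpha)\pi/2)$, the folding of the odd kernel into $\int_0^\infty \delta_y(f)(x)\,k_\alpha(y/t)\,dy$, and the damped-kernel limiting argument all check out, and they supply the details the paper leaves to its reference.
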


This can be proved by calculating the Fourier transform of $|\xi|^{-\alpha}$ 
(see, for example, \cite[Chap.\ IV]{20}).

\begin{lemma}
 For  $\xi \in \bold R$ we have 
\begin{multline*} 
\int_{-\infty}^{\infty} e^x k_{\alpha}(e^x)e^{-2\pi ix\xi}\,d\xi 
\\
= 4(2\pi)^{\alpha -1}
\Gamma(-2\pi i\xi +1)\sin(-\pi^2i\xi +\pi/2)\Gamma(-\alpha +2\pi i\xi)\sin(\pi^2i\xi 
- \alpha \pi/2). 
\end{multline*}  
\end{lemma}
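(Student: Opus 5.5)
The integral in the statement is over $x$ (the $d\xi$ is a misprint). Substituting $t=e^x$ and putting $s=1-2\pi i\xi$, the left-hand side becomes the Mellin transform
$$M(s)=\int_0^\infty k_\alpha(t)\,t^{s-1}\,dt .$$
Since $k_\alpha(t)=O(t)$ as $t\to0$ and $k_\alpha(t)=O(t^{\alpha-2})$ as $t\to\infty$, this integral converges absolutely and defines an analytic function in the strip $-1<\operatorname{Re}s<2-\alpha$. The line $\operatorname{Re}s=1$ lies inside this strip. The plan is to compute $M(s)$ in closed form on a smaller strip where each piece converges separately, and then use analytic continuation to reach $\operatorname{Re}s=1$, as was done at the end of the proof of Lemma 3.

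For $0<\operatorname{Re}s<1-\alpha$, split $k_\alpha$ into its two terms and integrate each separately. Write $C_\alpha=2(2\pi)^{\alpha-1}\Gamma(1-\alpha)\cos((1-\alpha)\pi/2)$.

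For the $|1-t|^{\alpha-1}$ term, split the range at $t=1$. On $(0,1)$ one gets $B(s,\alpha)$. On $(1,\infty)$, the substitution $t=1/u$ gives $B(1-\alpha-s,\alpha)$.

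For the $(1+t)^{\alpha-1}$ term one gets $B(s,1-\alpha-s)$. Hence
$$M(s)=C_\alpha\Big[\frac{\Gamma(s)\Gamma(\alpha)}{\Gamma(s+\alpha)}+\frac{\Gamma(1-\alpha-s)\Gamma(\alpha)}{\Gamma(1-s)}-\frac{\Gamma(s)\Gamma(1-\alpha-s)}{\Gamma(1-\alpha)}\Big].$$

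The main step, and the only real obstacle, is to simplify the bracket into the product form of the statement. I would factor out $\Gamma(s)\Gamma(1-\alpha-s)$ and apply the reflection formula $\Gamma(z)\Gamma(1-z)=\pi/\sin\pi z$ to each quotient. This turns the bracket into
$$\frac{\Gamma(s)\Gamma(1-\alpha-s)}{\pi}\bigl[\sin\pi\alpha\cdot\Gamma(\alpha)\Gamma(1-\alpha)\cdot(\ldots)\bigr],$$
which reduces to the trigonometric expression
$$\frac{\sin\pi(s+\alpha)+\sin\pi s-\sin\pi\alpha}{\sin\pi\alpha}.$$
A sum-to-product identity should collapse this to a multiple of $\sin(\pi s/2)\,\sin(\pi(1-\alpha-s)/2)/\cos(\pi\alpha/2)$ (up to sign). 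The factor $\cos(\pi\alpha/2)$ is then absorbed by the constant $C_\alpha$, because $\cos((1-\alpha)\pi/2)=\sin(\pi\alpha/2)$ and $\Gamma(1-\alpha)\sin(\pi\alpha/2)\sin\pi\alpha$ can be combined via reflection.

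As an independent check of the constants, I would also use the heuristic Fourier route. The definition of $C_\alpha$ (see Lemma 5) gives
$$k_\alpha(t)=-2i\int|\eta|^{-\alpha}e^{2\pi i\eta}\sin(2\pi t\eta)\,d\eta .$$
The classical formula
$$\int_0^\infty\sin(2\pi t\eta)\,t^{s-1}\,dt=\Gamma(s)\sin(\pi s/2)(2\pi|\eta|)^{-s}\sgn\eta$$
then leaves
$$2i\,\Gamma(1-\alpha-s)(2\pi)^{\alpha+s-1}\sin(\pi(1-\alpha-s)/2).$$
This yields
$$M(s)=4(2\pi)^{\alpha-1}\Gamma(s)\sin(\pi s/2)\,\Gamma(1-\alpha-s)\sin(\pi(1-\alpha-s)/2).$$

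With $s=1-2\pi i\xi$ this is exactly the asserted formula: $\sin(\pi s/2)=\sin(-\pi^2 i\xi+\pi/2)$, $1-\alpha-s=-\alpha+2\pi i\xi$, and $\sin(\pi(1-\alpha-s)/2)=\sin(\pi^2 i\xi-\alpha\pi/2)$. So the gamma/trig simplification above need only be verified against this target.

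Finally, both sides are analytic in $s$ on $0<\operatorname{Re}s<2-\alpha$. The right side's poles sit at $s=1-\alpha+k$, and there $\sin(\pi(1-\alpha-s)/2)$ vanishes for even $k$, while $s=1-\alpha+k$ with $k\ge1$ odd lies outside the strip. So the identity extends from $0<\operatorname{Re}s<1-\alpha$ to $\operatorname{Re}s=1$, which proves the lemma.
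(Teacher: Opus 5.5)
Your argument is correct, but its main line differs from the paper's.

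\textbf{The paper's route.} The paper never splits $k_\alpha$ into its two power terms. It starts from the oscillatory representation $k_\alpha(t)=4\lim_{S\to\infty}\int_0^S\eta^{-\alpha}\sin(2\pi\eta)\sin(2\pi t\eta)\,d\eta$, which is exactly your ``heuristic check''. It then inserts a damping factor $t^{-\epsilon}$, so that the $t$-integral sits on $\mathrm{Re}\,s=1-\epsilon$. The interchange of the conditionally convergent integrals is justified by dominated convergence, via the uniform bounds $O(|1-t|^{\alpha-1})$ and (4.3). Finally it applies the Mellin transform of $\sin$ twice and lets $\epsilon\to 0$.

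\textbf{Your route.} You use only absolutely convergent Beta integrals on $0<\mathrm{Re}\,s<1-\alpha$, plus analytic continuation. You handle the continuation correctly, including the removable singularity at $s=1-\alpha$. So no Fubini-type argument for oscillatory integrals is needed.

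\textbf{What each buys.} Your price is a gamma/trigonometric simplification. The paper's computation instead produces directly the factored form that is fed into Lemma 4, in parallel with Lemma 3.

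\textbf{One slip to fix.} The simplification does close exactly. Since $\sin(a+b)+\sin a-\sin b=4\sin\frac a2\cos\frac{a+b}2\cos\frac b2$, your quotient equals $2\sin(\pi s/2)\sin(\pi(1-\alpha-s)/2)/\sin(\pi\alpha/2)$. The denominator is $\sin(\pi\alpha/2)$, not $\cos(\pi\alpha/2)$. The bracket is then $\Gamma(s)\Gamma(1-\alpha-s)/\Gamma(1-\alpha)$ times this quotient.

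Now $\Gamma(1-\alpha)$ and $\sin(\pi\alpha/2)=\cos((1-\alpha)\pi/2)$ cancel exactly against $C_\alpha$. This gives $M(s)=4(2\pi)^{\alpha-1}\Gamma(s)\sin(\pi s/2)\,\Gamma(1-\alpha-s)\sin(\pi(1-\alpha-s)/2)$ with no sign ambiguity. So the Beta computation alone proves the lemma, and the heuristic Fourier check can be dropped.
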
 

\begin{proof} By a change of variables and the formulae in \cite[p.\ 181]{23} 
we have 
\begin{align}\label{tag4.1}   
&\int_{-\infty}^{\infty} e^x k_{\alpha}(e^x)e^{-2\pi ix\xi}\,dx 
= \int_0^{\infty} k_{\alpha}(t) t^{-2\pi i\xi}\,dt 
\\
&= \int_0^{\infty} 4\left( \lim_{S\to \infty}\int_0^{S}\eta^{-\alpha}\sin(2\pi \eta)
\sin(2\pi t\eta)\,d\eta\right) t^{-2\pi i\xi}\,dt     \notag 
\\
&= \lim_{\epsilon\to 0}\lim_{R\to \infty}\int_0^{R} 4\left( \lim_{S\to \infty}
\int_0^{S}\eta^{-\alpha}\sin(2\pi \eta)\sin(2\pi t\eta)\,d\eta\right) 
t^{-2\pi i\xi -\epsilon}\,dt.  \notag 
 \end{align} 
 
Since 
$$\int_0^{S}\eta^{-\alpha}\sin(2\pi \eta)\sin(2\pi t\eta)\,d\eta =
O(|1-t|^{\alpha -1}) \quad \text{uniformly in $S$},$$
by the dominated convergence theorem we get 
\begin{align}\label{tag4.2}  
&\int_0^{R} 4\left( \lim_{S\to \infty}
\int_0^{S}\eta^{-\alpha}\sin(2\pi \eta)\sin(2\pi t\eta)\,d\eta\right) 
t^{-2\pi i\xi -\epsilon}\,dt  
\\
&= \lim_{S\to \infty}4 \int_0^{S}\eta^{-\alpha}
\sin(2\pi \eta) \left( \int_0^{R}\sin(2\pi t\eta)t^{-2\pi i\xi -\epsilon}\,dt
\right) \,d\eta                   \notag
\\
&=4 \int_0^{\infty}\eta^{-\alpha}
\sin(2\pi \eta) \left( \int_0^{R}\sin(2\pi t\eta)t^{-2\pi i\xi -\epsilon}\,dt
\right) \,d\eta                   \notag
\end{align}  
if $\epsilon$ is small enough, where it is to be noted that  
\begin{equation} \label{tag4.3} 
\int_0^{R}\sin(2\pi t\eta)t^{-2\pi i\xi -\epsilon}\,dt = O(\eta^{-1+\epsilon}) 
\quad\text{uniformly in $R$.} 
\end{equation}
Using the relation (4.2) in  (4.1), and applying the dominated convergence 
theorem again, in view of (4.3), we get 
\begin{align*}  
&\int_{-\infty}^{\infty} e^x k_{\alpha}(e^x)e^{-2\pi ix\xi}\,d\xi 
\\
&= \lim_{\epsilon\to 0}\lim_{R\to \infty}4 \int_0^{\infty}\eta^{-\alpha}
\sin(2\pi \eta) \left( \int_0^{R}\sin(2\pi t\eta)t^{-2\pi i\xi -\epsilon}\,dt\right) \,d\eta
\\
&= \lim_{\epsilon\to 0}4 \int_0^{\infty}\eta^{-\alpha}
\sin(2\pi \eta)\lim_{R\to \infty} \left( \int_0^{R}\sin(2\pi t\eta)
t^{-2\pi i\xi -\epsilon}\,dt\right) \,d\eta
\\
&= \lim_{\epsilon\to 0}4\Gamma(-2\pi i\xi +1-\epsilon )\sin(-\pi^2i\xi +\pi/2 -\epsilon \pi/2)
(2\pi)^{2\pi i\xi -1+\epsilon}
\\
&\qquad\qquad \times \int_0^{\infty}\eta^{-\alpha +2\pi i\xi -1+\epsilon}
\sin(2\pi \eta)\,d\eta 
\\
&= \lim_{\epsilon\to 0}4\Gamma(-2\pi i\xi +1-\epsilon )\sin(-\pi^2i\xi +\pi/2 -\epsilon \pi/2)
(2\pi)^{\alpha -1}
\Gamma(-\alpha +2\pi i\xi +\epsilon)
\\
&\qquad \qquad \times \sin(\pi^2i\xi - \alpha \pi/2 +\epsilon\pi/2)
\\
&= 4\Gamma(-2\pi i\xi +1)\sin(-\pi^2i\xi +\pi/2)
(2\pi)^{\alpha -1}\Gamma(-\alpha +2\pi i\xi)\sin(\pi^2i\xi - \alpha \pi/2),
\end{align*}
where we have used the formulae in \cite[p.\ 181]{23} again.  This completes the proof of Lemma 6.
\end{proof} 

Put 
$$J_{\alpha}(x) = \prod_{j=1}^n \left(e^{x_j}k_{\alpha_j}(e^{x_j})\right) \quad (\in L^1(\bold R^n)).$$
Then, by Lemma 5 and a change of variables we see that 
$$ 
D_{\alpha}(f)(x_0)^2 = \int \left|\Psi \star J_{\alpha}(x)\right|^2\,dx = 
\int \left|\widehat{\Psi}(\xi)\widehat{J_{\alpha}}(\xi)\right|^2\,d\xi ,$$
where $\Psi$ is as in \S 3.   By Lemma 4 and Lemma 6  we see that 
$$\left|\widehat{J_{\alpha}}(\xi)\right| \sim 
\prod_{j=1}^n(1+|\xi_j|)^{-\alpha_j}.$$
Comparing this with (3.8), we get the conclusion. 

\section{Proof of Theorem $5$}

We use the same notation as in \S 3.  As in the one-variable case 
(see \cite{22}) 
we see that 
\begin{align*}  
(\partial/\partial x_1)\dots (\partial/\partial x_n) u(x_0,t) &= 
\int_{-\infty}^{\infty}\dots \int_{-\infty}^{\infty} \left(\prod_{j=1}^n 
\frac{-2t_ju_j}{\pi(u_j^2+t_j^2)^2}\right)f(x_0-u)\,du 
\\
&= \int_{0}^{\infty}\dots \int_{0}^{\infty}\left(\prod_{j=1}^n
\frac{-2t_ju_j}{\pi(u_j^2+t_j^2)^2}\right)\psi(u)\,du. 
\end{align*} 
Therefore
$$g_0(f)(x_0)^2= \int_{0}^{\infty}\dots \int_{0}^{\infty}
\left|\int_{0}^{\infty}\dots \int_{0}^{\infty}\left(\prod_{j=1}^n
\frac{-2t_j^2u_j}{\pi(1+t_j^2u_j^2)^2}\right)\psi(u)\,du\right|^2\,
\frac{dt_1}{t_1}\dots\frac{dt_n}{t_n}.$$

Set $K(x)=\pi^{-n}\prod_{j=1}^n(2e^{2x_j}/(1+e^{2x_j})^2)$. Then by the change 
of variables 
$u_j=e^{-y_j}$ and $t_j= e^{x_j}$, we see that 
\begin{equation}\label{tag5.1} 
g_0(f)(x_0)^2= \int\left|\Psi \star K (x) \right|^2\, dx
= \int\left|\widehat{\Psi}(\xi) \widehat{K}(\xi) \right|^2\, d\xi. 
\end{equation} 
As in  \cite{22}, using a well-known one-variable integral formula,  we have 
$$ 
\widehat{K}(\xi)= \prod_{j=1}^n\int_{-\infty}^{\infty}\frac{2e^{(-2\pi ix_j\xi_j+2x_j)}}{\pi(1+e^{2x_j})^2}\,dx_j
= \prod_{j=1}^n\int_{0}^{\infty}\frac{2t_j^{(-2\pi i\xi_j+1)}}
{\pi(1+t_j^{2})^2}\,dt_j
= \prod_{j=1}^n \frac{-\pi i\xi_j}{\sin (-\pi^2 i \xi_j)}.
$$
Thus we see that 
\begin{equation}\label{tag5.2} 
 \left|\widehat{K}(\xi)\right| \sim \prod_{j=1}^n(1+|\xi_j|)e^{-\pi^2|\xi_j|}.  \end{equation} 
By (3.3), (3.7), (5.1) and (5.2) we get the conclusion.

\section{Resonance theorems for sublinear operators on Orlicz spaces}  

Let $\Phi$ be a non-negative, continuous function on $[0, \infty)$ such that
\begin{enumerate}
\item[(1)]  $\Phi$ is increasing on $[0, \infty)$ and $\Phi(0)=0$;
\item[(2)]  $\Phi(t^{1/2})$ is a concave function of $t$ on $[0, \infty)$;
\item[(3)]  there exists a non-negative function 
$b(\lambda)$ ($\lambda \geq 1$) 
such that $\lim_{\lambda \to \infty}b(\lambda)=0$ and $\Phi(\lambda^{-1}t) 
\leq b(\lambda)\Phi(t)$ 
($\lambda \geq 1$);  
\item[(4)]  $\lim_{t \to \infty}\Phi(t)=\infty$. 
\end{enumerate} 

\begin{remark}
It is easy to see that the conditions (1) and (2) imply that 
$\Phi(2t) \leq 4\Phi(t)$ (see \cite{19}).  
 Functions $\Phi(t)=t(\log (2+t))^a$, $a\geq 0$, and $\Phi(t)=t^p$, $0<p\leq 2$, satisfy the  
 conditions (1) to (4).  If $\Phi$ is convex and $\Phi(0)=0$, then $\Phi(\lambda^{-1}t)\leq 
 \lambda^{-1}\Phi(t)$  ($\lambda \geq 1$), and so the condition (3) is 
satisfied.  
\end{remark} 

Let $(X, \mu)$ and $(Y, \nu)$ be measure spaces.  We assume that 
the space $(Y, \nu)$ is $\sigma$-finite.  
Let $L^\Phi(X)$ be the family (Orlicz space) of 
measurable functions $f$ on $X$ such that $\|f\|_{\Phi} =\int_X \Phi(|f|)\,d\mu < \infty$. 
 We note that if $f$,  $g \in L^\Phi(X)$, then $f+g \in L^\Phi(X)$ and $\lambda f \in L^\Phi(X)$ 
 for any scalar $\lambda$. If $\Phi(t)=t(\log (2+t))^a$ ($a\geq 0$), then 
 the space $L^\Phi(X)$ will be denoted by $L(\log L)^a(X)$.  
  
 Let $T$ be an operator from $L^\Phi(X)$ into the space of measurable functions on $Y$. We assume 
 that $T$ is sublinear, that is, 
 $$|T(f+g)| \leq |T(f)|+|T(g)|, \qquad  |T(\lambda f)| = |\lambda||T(f)| $$
 for all $f$, $g \in L^\Phi(X)$ and all scalars $\lambda$.  Furthermore, we assume that $T$ 
 is continuous in measure in the following sense :
 \newline 
 \indent for any $E$  $(\subset Y)$ of finite measure  
 there exists a non-negative function $C_E(\lambda)$ ($\lambda > 0$)  such that   
 $$\lim_{\lambda \to \infty}C_E(\lambda) =0$$ 
 and 
\begin{multline}\label{tag6.1}  
 \nu\left(\left\{x\in E : |Tf(x)|> \lambda  \right\}\right) 
\leq C_E(\lambda) 
 \quad (\lambda >0)  
\\
 \text{uniformly in $f$ satisfying $\|f\|_{\Phi} = 1$}. 
\end{multline} 
 Then we have the following.

 \begin{theorem} 
 There exists a positive weight $w$ on $Y$ such that 
 $$\int_{\left\{x\in Y : |Tf(x)|> \lambda \right\}}w(x)\,d\nu(x) \leq 
 \int_X\Phi\left(\frac{|f(x)|}{\lambda}\right)\,d\mu(x) \qquad (\lambda>0)$$
 for all $f \in L^{\Phi}(X)$.
 \end{theorem}            
 
 This is a generalization of part of \cite[Chap.\ VI, Corollary 2.7]{6}. 
 
 To prove Theorem 7 we prepare some lemmas.

 \begin{lemma} 
 Let $\{f_j\}$ be a finite sequence of functions in $L^\Phi(X)$.  Set $g_t(x)=
 \sum_jr_j(t)f_j(x)$, where $\{r_j\}$ denotes the Rademacher functions.  Then 
 $$\nu\left(\left\{x\in E : \sup_j|Tf_j(x)|> \lambda \right\}\right)\leq 
 2\int_E \left|\left\{t\in [0,1] : |Tg_t(x)|>\lambda \right\}\right|\,d\nu(x) 
 \quad (\lambda>0)$$
 for all measurable subsets $E$ of $Y$.
 \end{lemma}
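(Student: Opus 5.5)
The plan is to argue pointwise in $x$ and then integrate over $E$. Fix $x\in E$ and $\lambda>0$, and assume $\sup_j|Tf_j(x)|>\lambda$; the set where this fails contributes nothing to the left-hand side. Choose an index $k$ with $|Tf_k(x)|>\lambda$. We will show that
$$\left|\left\{t\in[0,1]: |Tg_t(x)|>\lambda\right\}\right|\geq 1/2,$$
and then integrate this inequality over the set $\{x\in E:\sup_j|Tf_j(x)|>\lambda\}$ with respect to $\nu$. Since the integrand on the right is nonnegative everywhere, this gives the stated bound with the factor $2$. Measurability of $(x,t)\mapsto Tg_t(x)$ is not an issue, because $g_t$ takes only finitely many distinct values as $t$ varies (one for each sign pattern of the Rademacher functions). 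The function $t\mapsto|\{\,\cdot\,\}|$ is therefore a finite sum of indicator functions of measurable sets in $x$ times lengths of dyadic sets, hence measurable.

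The key step is a symmetrization argument that uses only sublinearity. Write $g_t=r_k(t)f_k+h_t$, where $h_t=\sum_{j\ne k}r_j(t)f_j$. Let $\sigma$ be the measure-preserving map of $[0,1]$ that flips the signs of all $r_j$ with $j\neq k$ and leaves $r_k$ unchanged. Concretely, $\sigma$ acts on the binary digits by flipping every digit except the $k$-th one, which is a measure-preserving bijection up to null sets. Then
$$g_t+g_{\sigma(t)}=2r_k(t)f_k,$$
and $|r_k(t)|=1$. Sublinearity, including $|T(\lambda f)|=|\lambda||Tf|$, gives
$$2|Tf_k(x)|=|T(g_t+g_{\sigma t})(x)|\le |Tg_t(x)|+|Tg_{\sigma t}(x)|.$$
Hence $\max(|Tg_t(x)|,|Tg_{\sigma t}(x)|)\ge|Tf_k(x)|>\lambda$ for every $t$. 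So if $A=\{t:|Tg_t(x)|>\lambda\}$, then $A\cup\sigma^{-1}(A)=[0,1]$ up to null sets. Since $\sigma$ preserves measure, $2|A|\ge1$, which is exactly the claim.

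The only delicate point is the exact construction of $\sigma$ on $[0,1]$ and checking that it preserves Lebesgue measure. I would handle this by identifying $[0,1]$, up to a null set, with $\{0,1\}^{\mathbb N}$ with the product measure; the Rademacher functions are then coordinate functions, and $\sigma$ is coordinatewise flipping on finitely many coordinates. Equivalently, since only finitely many $r_j$ are involved, one can work directly on the finitely many dyadic intervals of the relevant length. On these, $\sigma$ permutes intervals of equal length, so the measure is preserved.
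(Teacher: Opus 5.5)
Your proof is correct and is essentially the argument the paper points to in \cite[p.\ 535]{6} (the paper gives no proof of its own): fix $x$ with $|Tf_k(x)|>\lambda$, pair each $t$ with the measure-preserving image $t'$ in which the signs of all $r_j$, $j\neq k$, are reversed, so that $g_t+g_{t'}=2r_k(t)f_k$, and use sublinearity to conclude $|\{t:|Tg_t(x)|>\lambda\}|\ge 1/2$. Your observation that only finitely many sign patterns occur also correctly handles the measurability in $x$ and the fact that the sublinearity inequalities hold only almost everywhere.
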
 
 For the proof see \cite[p.\ 535]{6}.
 
 \begin{lemma} Let $\{r_j\}$ be a finite sequence of the Rademacher functions. 
  Set $F(t)=\sum_{j} c_jr_j(t)$. Then 
 $$\int_0^1\Phi(|F(t)|)\,dt \leq \sum_{j}\Phi(|c_j|).$$
 \end{lemma}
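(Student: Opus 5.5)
The statement to prove is Lemma 8: for Rademacher functions $r_j$ and $F(t)=\sum_j c_jr_j(t)$ (a finite sum),
$$\int_0^1\Phi(|F(t)|)\,dt\le \sum_j\Phi(|c_j|).$$
The plan is to use condition (2), the concavity of $\Psi(s)=\Phi(s^{1/2})$, twice. First, Jensen's inequality turns the average of $\Phi(|F|)$ into $\Phi$ of the $L^2$ norm of $F$. Then $L^2$ orthogonality of the Rademacher functions gives that norm explicitly. Finally, subadditivity of a concave function vanishing at $0$ splits the result into the individual terms.

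\textbf{Step 1 (Jensen).} Note that $\Phi(|F(t)|)=\Psi(|F(t)|^2)$. Since $\Psi$ is concave on $[0,\infty)$ and $dt$ is a probability measure on $[0,1]$, Jensen's inequality gives
$$\int_0^1\Psi(|F(t)|^2)\,dt\le \Psi\Bigl(\int_0^1|F(t)|^2\,dt\Bigr).$$
There is no integrability issue here, because $F$ is a finite step function.

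\textbf{Step 2 (orthogonality).} The Rademacher functions are orthonormal in $L^2([0,1])$, so
$$\int_0^1|F|^2\,dt=\sum_j|c_j|^2.$$
This holds for complex $c_j$ as well, since the $r_j$ are real-valued.

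\textbf{Step 3 (subadditivity).} A concave function $\Psi$ on $[0,\infty)$ with $\Psi(0)=0$ is subadditive. To see this, take $a,b\ge 0$ with $a+b>0$. Concavity between $0$ and $a+b$ gives $\Psi(a)\ge \frac{a}{a+b}\Psi(a+b)$ and $\Psi(b)\ge \frac{b}{a+b}\Psi(a+b)$; adding these yields $\Psi(a)+\Psi(b)\ge\Psi(a+b)$. Induction extends this to finite sums, so
$$\Psi\Bigl(\sum_j|c_j|^2\Bigr)\le\sum_j\Psi(|c_j|^2)=\sum_j\Phi(|c_j|).$$
Chaining Steps 1--3 gives the lemma.

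No step is a real obstacle. The only points to check are that Jensen's inequality is applied to the concave function $\Psi$ (so the inequality runs the right way), and that $\Psi(0)=0$, which follows from condition (1).
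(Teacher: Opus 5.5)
Your proof is correct and complete. Jensen's inequality for the concave $\Psi(s)=\Phi(s^{1/2})$, orthonormality of the Rademacher functions, and subadditivity of $\Psi$ (from $\Psi(0)=0$) are the standard argument for this inequality; the paper gives no proof of its own and only cites [19, p.~155]. Step 2 implicitly requires the $r_j$ in the sum to be distinct, which is the intended reading and is how the lemma is applied in Lemma 9.
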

 
 See  \cite[ p.\ 155]{19}.

 \begin{lemma} 
 Let $E$ be a subset of $Y$ of finite measure.  Then there exists a non-negative function  
 $\widetilde{C}_E(\lambda)$ $(\lambda>0)$ tending to $0$ as $\lambda \to \infty$ for which 
 the following holds $:$   
 if $\{f_j\}$ is a sequence of functions in $L^\Phi(X)$ satisfying 
 $\sum_j\int_X \Phi(|f_j|)\,d\mu \leq 1$, then 
\begin{equation} \label{tag6.2} 
 \nu\left(\left\{x\in E : \sup_j |Tf_j(x)|> \lambda \right\}\right)\leq 
 \widetilde{C}_E(\lambda) \qquad (\lambda>0).  
\end{equation} 
 \end{lemma}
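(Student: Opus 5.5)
We reduce Lemma 9 to the case of finitely many functions and then combine Lemma 7 (the Rademacher device), Lemma 8 (the averaged $\Phi$-inequality) and the continuity in measure (6.1). First we may assume the sequence $\{f_j\}$ is finite. Indeed, the sets $\{x\in E : \max_{j\le N}|Tf_j(x)|>\lambda\}$ increase to $\{x\in E:\sup_j|Tf_j(x)|>\lambda\}$ as $N\to\infty$, so a bound uniform in $N$ passes to the limit by monotone convergence of measures. Fix such a finite family with $\sum_j\int_X\Phi(|f_j|)\,d\mu\le 1$ and put $g_t=\sum_j r_j(t)f_j$. By Lemma 7,
$$\nu\bigl(\{x\in E:\sup_j|Tf_j(x)|>\lambda\}\bigr)\le 2\int_0^1 \nu\bigl(\{x\in E : |Tg_t(x)|>\lambda\}\bigr)\,dt ,$$
after exchanging the order of integration by Fubini (the relevant set in $E\times[0,1]$ is measurable because $t\mapsto g_t$ takes only finitely many values).

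Next we control the $\Phi$-size of each $g_t$. By Lemma 8 applied pointwise in $x$, followed by Fubini,
$$\int_0^1 \|g_t\|_\Phi\,dt=\int_X\int_0^1\Phi\Bigl(\Bigl|\sum_j r_j(t)f_j(x)\Bigr|\Bigr)\,dt\,d\mu(x)\le \sum_j\int_X\Phi(|f_j|)\,d\mu\le 1 .$$
Hence for $M\ge1$ the set $B_M=\{t:\|g_t\|_\Phi> M\}$ has $|B_M|\le 1/M$ by Chebyshev. On $B_M$ we bound the inner measure by $\nu(E)$, which contributes at most $2\nu(E)/M$.

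For $t\notin B_M$ we need to convert $\|g_t\|_\Phi\le M$ into a bound for $\nu(\{x\in E:|Tg_t|>\lambda\})$. This is the main obstacle, because (6.1) is stated only for $\|f\|_\Phi=1$ and $\|\cdot\|_\Phi$ is not homogeneous. The plan is as follows. If $\|g_t\|_\Phi=0$ then $g_t=0$ a.e.; we take $Tg_t=T(0\cdot g_t)=0$ by homogeneity, assuming $T$ is defined on equivalence classes, which we adopt. Otherwise, by continuity and monotonicity of $\Phi$ together with (4), the function $s\mapsto\|s g_t\|_\Phi$ is continuous and increasing from $0$ to $\infty$, with $\|s g_t\|_\Phi\le 4^k\|g_t\|_\Phi$ for $s\le 2^k$ by Remark 4. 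So there is $s_t>0$ with $\|s_tg_t\|_\Phi=1$. We need $s_t\ge\sigma(M)>0$ uniformly whenever $\|g_t\|_\Phi\le M$. Condition (3) gives $\|\eta g\|_\Phi\le b(1/\eta)\|g\|_\Phi$ for $\eta\le1$, so choosing $\eta=\sigma(M)$ with $b(1/\sigma(M))\,M<1$ forces $s_t\ge\sigma(M)$; here $M\ge1$ and we use $b\to0$. Then, by sublinearity and (6.1),
$$\nu(\{x\in E:|Tg_t|>\lambda\})=\nu(\{x\in E:|T(s_tg_t)|>s_t\lambda\})\le \sup_{s\ge\sigma(M)}C_E(s\lambda).$$
To make this monotone, we replace $C_E$ at the outset by $C_E^*(\lambda)=\sup_{\lambda'\ge\lambda}C_E(\lambda')$, which still tends to $0$. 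The bound then becomes $C_E^*(\sigma(M)\lambda)$.

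Combining the two contributions,
$$\nu(\{x\in E:\sup_j|Tf_j|>\lambda\})\le 2\nu(E)/M+2C_E^*(\sigma(M)\lambda)$$
for every $M\ge1$. We define $\widetilde C_E(\lambda)=\inf_{M\ge1}\bigl[2\nu(E)/M+2C_E^*(\sigma(M)\lambda)\bigr]$. Given $\varepsilon>0$, we first choose $M$ with $2\nu(E)/M<\varepsilon$ and then let $\lambda\to\infty$; this shows $\widetilde C_E(\lambda)\to0$, which proves (6.2).
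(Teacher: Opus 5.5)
Your proposal is correct and follows essentially the paper's argument: reduce to finite families, apply Lemma 7, average with Lemma 8, discard a small exceptional set of $t$, normalize $g_t$ to $\Phi$-size $1$ on the remaining $t$, and invoke (6.1). The difference is only in the bookkeeping. The paper ties the exceptional set to $\lambda$ through the condition $\int_X\Phi(\lambda^{-1/2}|g_t|)\,d\mu>1$ and uses (3) on the $f_j$, which gives $|B_\lambda|\le b(\lambda^{1/2})$ and the explicit $\widetilde C_E(\lambda)=2\nu(E)b(\lambda^{1/2})+2C_E(\lambda^{1/2})$. You instead fix a Chebyshev threshold $M$, use (3) on $g_t$ to bound the normalizing factor from below, and take an infimum over $M$. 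Your monotonization $C_E^*$ is unnecessary, since $\{x\in E:|T(s_tg_t)(x)|>s_t\lambda\}\subset\{x\in E:|T(s_tg_t)(x)|>\sigma(M)\lambda\}$ already gives the bound $C_E(\sigma(M)\lambda)$.
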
 
 
 \begin{proof}  It is sufficient to prove the lemma for finite sequences 
$\{f_j\}_{j=1}^N$ 
 with $\widetilde{C}_E(\lambda)$ independent of $N$. 
 For $\lambda >1$  put
 $$B_{\lambda}= \left\{t\in [0, 1] : \int_X \Phi(\lambda^{-1/2}|g_t (x)|)
\,d\mu(x)  >1 \right\},$$ 
 where $g_t(x) = \sum r_j(t)f_j(x)$.  Then by Lemma 8 
 \begin{align*}  
 |B_{\lambda}| &\leq \int_0^1 \int_X \Phi(\lambda^{-1/2}|g_t (x)|)\,d\mu(x) 
\, dt
 \\
 &\leq \sum_j \int_X \Phi(\lambda^{-1/2}|f_j (x)|)\,d\mu(x) 
\leq b(\lambda^{1/2}), 
 \end{align*}  
 where we have used part (3) of the properties of $\Phi$ and our assumption 
that  $\sum \|f_j\|_\Phi \leq 1$ to get the last inequality. 

 For $t \in [0, 1]\setminus B_\lambda$, define 
 $$\lambda_t= \inf\left\{\eta >0 : \int_X \Phi(\eta^{-1/2}|g_t (x)|)\,d\mu(x) 
 \leq 1 \right\}.$$
 Then, if $g_t\not= 0$, we have $0< \lambda_t \leq \lambda$ and 
 $\int_X \Phi(\lambda^{-1/2}_t|g_t|)\,d\mu =1$.  
   Using Lemma 7 and these facts, we see that 
\begin{align*}  
&\nu\left(\left\{x\in E : \sup_j|Tf_j(x)|> \lambda \right\}\right) \leq 
 2\int_E \left|\left\{t\in [0,1] : |Tg_t(x)|>\lambda \right\}\right|\,d\nu(x)
 \\
 &\leq 2\nu(E)|B_\lambda| + 2\int_{[0, 1]\setminus B_\lambda}
 \nu\left(\left\{x \in E : |Tg_t(x)| > 
 \lambda^{1/2}\lambda_t^{1/2} \right\}\right) \, dt
 \\
 &\leq 2\nu(E)b(\lambda^{1/2}) + 2C_E(\lambda^{1/2}).
 \end{align*}  
 Therefore we can take $\widetilde{C}_E(\lambda) =  2\nu(E)b(\lambda^{1/2}) + 
 2C_E(\lambda^{1/2})$ for $\lambda >1$ and $\widetilde{C}_E(\lambda) = \nu(E)$ for $\lambda 
 \leq 1$.   This completes the proof of Lemma 9.
  \end{proof} 
 
 \begin{lemma} 
 Let $E$ be a subset of $Y$ of finite measure.  
 For any $\epsilon >0$ there exist a set $E_{\epsilon} \subset E$ and a positive number 
 $R(\epsilon, E)$ such that  $\nu (E\setminus E_{\epsilon})< \epsilon$ and 
 $$\nu\left(\left\{x\in E_{\epsilon} : |Tf(x)|> \lambda \right\}\right)\leq 
 \int_X\Phi\left(\frac{R(\epsilon, E)|f(x)|}{\lambda}\right)\,d\mu(x)
 \qquad (\lambda >0)$$
 for all $f \in L^{\Phi}(X)$. 
 \end{lemma}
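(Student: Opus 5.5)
We follow the Nikishin--Maurey exhaustion scheme, with Lemma 9 as the main tool. Fix $E$ with $\nu(E)<\infty$ and $\epsilon>0$. Using the function $\widetilde{C}_E$ of Lemma 9, choose $R_0$ so large that $\widetilde{C}_E(\lambda)<\epsilon$ for all $\lambda\geq R_0$. For a measurable $A\subset E$, say $A$ is \emph{bad} if there are $f\in L^\Phi(X)$ and $\lambda>0$ with
$$\nu(\{x\in A : |Tf(x)|>\lambda\})\ge \int_X\Phi(R|f|/\lambda)\,d\mu$$
for the $R$ still to be fixed. Rather than working with arbitrary sets, we run the exhaustion directly on functions. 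Put $R=R_0$ and set $E_\epsilon=E\setminus F$, where $F$ is a union of exceptional sets built greedily. Suppose the inequality of the lemma fails on $E_\epsilon$ for some $f$ and $\lambda$. By homogeneity of $T$ we may take $\lambda=R$; replacing $f$ by $f/R$, the failure reads $\nu(\{x\in E_\epsilon: |Tf|>R\})>\int\Phi(|f|)\,d\mu$. Call $F_1=\{x\in E: |Tf_1|>R\}$ for such an $f_1$, and choose it nearly maximal: among all admissible $f$, take one for which $\nu(\{|Tf|>R\}\cap E)$ exceeds half of the supremum. Remove $F_1$ and repeat on $E\setminus F_1$, obtaining $f_1,f_2,\dots$ with pairwise disjoint sets $F_k\subset\{x\in E: |Tf_k|>R\}$ and
$$\int\Phi(|f_k|)\,d\mu<\nu(F_k).$$

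To control the total measure, note that since the $F_k$ are disjoint subsets of $E$, we get $\sum_k\int\Phi(|f_k|)\le \sum_k\nu(F_k)\le\nu(E)$. Normalize by $\nu(E)$: set $g_k=f_k/s$, where $s\ge1$ is chosen so that $\sum\int\Phi(|g_k|)\le1$. Remark 3 gives $\Phi(2t)\le4\Phi(t)$, so we may take $s=2^m$ with $4^m\ge\nu(E)$. Then $|Tg_k|>R/s$ on $F_k$, and Lemma 9 applied to $\{g_k\}$ yields
$$\nu\Bigl(\bigcup_kF_k\Bigr)\le \nu\bigl(\{x\in E:\sup_k|Tg_k|>R/s\}\bigr)\le\widetilde{C}_E(R/s).$$
So we choose $R$ with $\widetilde{C}_E(R/s)<\epsilon$; since $s$ depends only on $\nu(E)$, such an $R=R(\epsilon,E)$ exists. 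Set $F=\bigcup F_k$ and $E_\epsilon=E\setminus F$, so that $\nu(E\setminus E_\epsilon)<\epsilon$.

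The main obstacle is showing that the process terminates, i.e.\ that no bad $f$ remains for $E_\epsilon$. If the process stops at a finite stage, there is nothing more to prove. Otherwise, the near-maximal choice at stage $k$ gives $\nu(F_k)\ge\frac12\sup\{\nu(\{|Tf|>R\}\cap(E\setminus(F_1\cup\dots\cup F_{k-1}))) : f \text{ bad}\}$. Because $\sum_k\nu(F_k)\le\nu(E)<\infty$, we have $\nu(F_k)\to0$. Now take any $f$ violating the inequality on $E_\epsilon=E\setminus F$. Its violating set $\{x\in E_\epsilon: |Tf|>R\}$ lies in $E\setminus(F_1\cup\dots\cup F_{k-1})$ for every $k$, and it has positive measure, exceeding $\int\Phi(|f|)\ge0$. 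Hence that measure is at most $2\nu(F_k)\to0$, which forces it to be $0$, a contradiction, since the failure requires strictly positive measure. There is one edge case: when the failure reads $\nu>\int\Phi(|f|)$ with $\int\Phi(|f|)=0$, we have $f=0$ a.e., so $Tf=0$ and the violating set is empty anyway. Taking $\nu(F_k)$ relative to the remaining set in this way settles termination and proves the lemma.
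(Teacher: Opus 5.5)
Your exhaustion scheme is sound, but one step fails as written: the normalization before Lemma 9.

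\textbf{The normalization step.} From $\Phi(2t)\le 4\Phi(t)$ you only get $\Phi(t)\le 4^m\Phi(t/2^m)$. That is a \emph{lower} bound $\Phi(t/2^m)\ge 4^{-m}\Phi(t)$. To apply Lemma 9 to $g_k=f_k/s$ you need an \emph{upper} bound, $\Phi(t/s)\le \Phi(t)/\nu(E)$. Doubling cannot give this. For $\Phi(t)=t$ and $\nu(E)=4$ your recipe gives $s=2$, and then you only know $\sum_k\int_X\Phi(|g_k|)\,d\mu<2$, so Lemma 9 does not apply.

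The correct tool is hypothesis (3). Choose $s\ge 1$ with $b(s)\,\nu(E)\le 1$. Then
$\sum_k\int_X\Phi(|f_k|/s)\,d\mu\le b(s)\sum_k\int_X\Phi(|f_k|)\,d\mu\le b(s)\,\nu(E)\le 1$.
The paper simply assumes $\nu(E)=1$; handling a general $\nu(E)$ is where (3) comes in.

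\textbf{Order of choices.} Admissibility of $f$ depends on $R$, so $R$ must be fixed before the exhaustion is run, not chosen afterwards as your text suggests. Take $R=sR_0$, where $\widetilde{C}_E(\lambda)<\epsilon$ for $\lambda\ge R_0$. This is harmless because $s$ depends only on $\nu(E)$ and $\Phi$. Also, at stage $k$ the supremum must run over functions admissible relative to the remaining set $E\setminus(F_1\cup\dots\cup F_{k-1})$. Otherwise the chosen $f_k$ need not satisfy $\int_X\Phi(|f_k|)\,d\mu<\nu(F_k)$.

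\textbf{Comparison with the paper.} With these repairs your argument is correct, and it builds the exceptional set differently from the paper.
\begin{itemize}
\item The paper defines property ($\dagger$) for subsets $F\subset E$ and uses Zorn's lemma to get a maximal countable disjoint family $\{M_j\}$ with ($\dagger$). Lemma 9 bounds $\nu(\bigcup M_j)$. Any violation on $E\setminus\bigcup M_j$ would yield a new set with ($\dagger$), contradicting maximality.
\item You build the family greedily, each time taking a function whose superlevel set in the remaining part of $E$ has more than half the supremal measure. Termination comes from $\nu(F_k)\to 0$: a function violating the inequality on $E_\epsilon$ is admissible at every stage, so its violating set has measure at most $2\nu(F_k)\to 0$, contradicting the strict positivity of that measure.
\end{itemize}
Your route needs only countable (dependent) choice. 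It also avoids checking that chains in $\mathcal{F}$ have upper bounds, which uses the fact that disjoint sets of positive measure in $E$ are countable. The paper's route is shorter and makes the final contradiction immediate. Both use Lemma 9 in the same way.
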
 
  
 \begin{proof} We assume as we may that $\nu(E) =1$.  
 Given $\epsilon >0$, take $R_\epsilon >0$ such that $\widetilde{C}_E(R_\epsilon) < \epsilon$ 
 (see Lemma 9).  
 Suppose there exists a subset $F $ of $E$ such that 
 \newline 
 \indent ($\dagger$) \quad we can find $f \in L^\Phi(X)$ and $\lambda >0$ for 
which  the following holds : 
\begin{enumerate}
\item[(1)]\qquad  $|Tf(x)| > \lambda$ \qquad for all \qquad $x\in F$;
\item[(2)]  $$\nu(F) > \int_X\Phi\left(\frac{R_{\epsilon}|f(x)|}{\lambda}
\right)\,d\mu(x).$$
\end{enumerate}  
 If there exists no such $F$, then we have the conclusion of Lemma 10. 
 
 Let $\mathcal F$ be the collection of all countable families of disjoint measurable 
 subsets $F$ of $E$ (of positive measure) which satisfy the property ($\dagger$).   
 We define an order $\prec$ in $\mathcal F$ as follows: $\{F_j\} \prec 
 \{G_j\}$  ($\{F_j\}$, $\{G_j\} \in \mathcal F$)  if $\{F_j\} \subset \{G_j\}$.   
 Then, by Zorn's lemma, there exists a maximal 
 element $\{M_j\}$ in $\mathcal F$. 
 
 For each $M_j$ 
 take $f_j \in L^\Phi(X)$ and $\lambda_j>0$ satisfying the relations (1) and (2) with 
 $F=M_j, f=f_j, \lambda=\lambda_j$.  
 Put $S= \cup M_j$ and $g_j=R_\epsilon f_j/\lambda_j$.  
 Then 
 $$\sup_j|Tg_j(x)|>R_\epsilon \qquad \text{for all} \quad x \in S$$
 and 
 $$\sum_j\int_X\Phi(|g_j|)\,d\mu \leq \sum_j\nu(M_j) \leq\nu(E) \leq 1.$$
 So, by  (6.2)
 $$\nu(S) \leq \widetilde{C}_E(R_\epsilon) <\epsilon.$$
 
 Put $E_{\epsilon}=E\setminus S$. We shall see that 
  $$\nu\left(\left\{x\in E_{\epsilon} : |Tf(x)|> \lambda \right\}\right)\leq 
 \int_X\Phi\left(\frac{R_\epsilon |f(x)|}{\lambda}\right)\,d\mu(x),$$
 for all $\lambda >0$ and all $f \in L^\Phi(X)$.  If this does not hold, 
there exist $f_0 \in 
 L^\Phi(X)$ and $\lambda_0>0$ such that 
 $$\nu\left(\left\{x\in E_{\epsilon} : |Tf_0(x)|> \lambda_0 \right\}\right) > 
 \int_X\Phi\left(\frac{R_\epsilon |f_0(x)|}{\lambda_0}\right)\,d\mu(x).$$
 Put $E_0=\{x\in E_{\epsilon} : |Tf_0(x)|>\lambda_0\}$.  Then $E_0$, $f_0$ and $\lambda_0$, 
 in place of  $F$, $f$ and $\lambda$, respectively, satisfy (1) and (2).  
 This contradicts the maximality of $\{M_j\}$. Thus our assertion follows.  This completes 
 the proof of Lemma 10. 
\end{proof}

Now we can prove Theorem 7.  Let $Y = \cup_{j=1}^{\infty}E^{(j)}$, with 
$\nu(E^{(j)}) < \infty$ (the $\sigma$-finiteness).    
Let $R(\epsilon, E^{(j)})>0$ and $E_{\epsilon}^{(j)} \subset E^{(j)}$ be as in 
Lemma 10. 
 Put $R(\epsilon, E^{(j)}) = R(\epsilon, j)$ and $E_{\epsilon}^{(j)} 
=E(\epsilon, j) $. 
  We take $M(\epsilon, j) $ $(\geq 1)$ satisfying 
$$\Phi\left(R(\epsilon, j)t\right) \leq 
M(\epsilon, j)\Phi\left(t\right) \quad (t>0).$$ 

 Define
$$w(x) = \sum_{j=1}^\infty \sum_{m=1}^{\infty}2^{-j}2^{-m}M(1/m, j)^{-1}
\chi_{E(1/m, j)}(x).$$ 
Then $w(x)>0$  $\nu$-a.e.  By Lemma 10  
\begin{align*}  
&\int_{\left\{x\in Y : |Tf(x)|> \lambda \right\}}w(x)\,d\nu(x) 
\\ 
&\leq \sum_{j=1}^\infty \sum_{m=1}^{\infty}2^{-j}2^{-m}M(1/m, j)^{-1}
\nu\left(\left\{x\in E(1/m, j) : |Tf(x)|> \lambda \right\}\right)
\\
&\leq \sum_{j=1}^\infty \sum_{m=1}^{\infty}2^{-j}2^{-m}M(1/m, j)^{-1}
\int_X\Phi\left(\frac{R(1/m, j)|f(x)|}{\lambda}\right)\,d\mu(x)
\\
&\leq \int_X\Phi\left(\frac{|f(x)|}{\lambda}\right)\,d\mu(x).
\end{align*} 
This completes the proof of Theorem 7.  

 We suppose  that $X=Y=\bold R^n$ and $\mu$, $\nu$ are the Lebesgue measure.  
  In addition to the continuity in measure (see (6.1)),  
  we assume that a sublinear operator $T$ is translation-invariant and dilation-invariant 
  on $L^\Phi(\bold R^n)$, which means : 
  $$ T(\tau_h f) = \tau_h Tf \quad \text{for all}\quad h \in \bold R^n 
  \quad \text{and} \quad d_r(Tf)= T(d_r f)  \quad \text{for all}\quad r>0, $$ 
 respectively, where $\tau_h f(x) = f(x-h), d_r f(x) = f(rx)$. 
 Then, using Theorem 7, we can prove the following.

\begin{theorem}  
 There exists a positive constant $A$ such that 
 $$\left|\left\{x\in \bold R^n : |Tf(x)|> \lambda \right\}\right| \leq 
 \int_{\bold R^n}\Phi\left(\frac{A|f(x)|}{\lambda}\right)\,dx 
\quad (\lambda>0)$$
 for all $f\in L^\Phi(\bold R^n)$. 
 \end{theorem}

 The proof is similar to that of Corollary 2.9 of \cite[Chap.\ VI]{6}.

\section{Proof of Theorem $6$}    

We prove Theorem 6 by an argument involving induction on the dimension.  
For this we need to consider  vector-valued Marcinkiewicz integrals.  
Let $m$ be a positive integer. 

\begin{assertion} Let $f$ be a measurable function on 
$\bold R^m$ with values in a separable Hilbert space $\mathcal A$.    
Suppose that $|f|_{\mathcal A}(\log(2+|f|_{\mathcal A}))^{m-1}$ is integrable 
over $\bold R^m$.  
Let $E$ be a compact set in $\bold R^m$.  Then if 
$\alpha = (\alpha_1, \dots , \alpha_m)$ with $\alpha_i > 1/2$, we have 
$$\left|\left\{x\in E : \mu_{\alpha}^{\mathcal A}(f)(x) > \lambda \right\}\right| 
\leq C_{\alpha, E}\lambda^{-1}\left(1 + \int_{\bold R^m}
|f(x)|_{\mathcal A}\left(\log \left(2 + |f(x)|_{\mathcal A}\right)\right)^{m-1}\,dx\right)$$
for all $\lambda > 0$, where $\mu_{\alpha}^{\mathcal A}$ is defined as in 
\S 1 and $C_{\alpha, E}$ 
 is independent of $f$ and $\lambda$.
\end{assertion}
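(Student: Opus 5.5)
We prove Proposition $P(m)$ by induction on $m$.

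\emph{Base case $m=1$.} We use the Corollary with some $p$ satisfying $2/(2\alpha_1+1)<p<1$; such $p$ exists because $\alpha_1>1/2$. This gives $H^p_{\mathcal A}$--$L^p$ boundedness of $\mu^{\mathcal A}_{\alpha_1}$. The operator has a kernel $\varphi^{(\alpha_1)}$ with mean zero. We then apply the vector-valued version of the standard argument: Calder\'on--Zygmund decomposition plus $H^p$ atoms, as in \cite[Theorem 3.37]{5} in the scalar case. This yields the weak type $(1,1)$ estimate
$$|\{\mu^{\mathcal A}_{\alpha_1}(f)>\lambda\}|\le C\lambda^{-1}\|f\|_{1,\mathcal A}.$$
The bad part $b=\sum b_Q$ has $\mathcal A$-valued pieces with mean zero. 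Each $b_Q/\|b_Q\|_1$ is a multiple of a vector-valued molecule, or an atom after splitting. Its $H^p$ quasi-norm controls $\mu^{\mathcal A}(b_Q)$ in $L^p$ off the expanded cube, and $p<1$ lets us sum. The weak $(1,1)$ bound trivially implies $P(1)$, for every compact $E$.

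\emph{Inductive step.} Assume $P(m-1)$ for all separable Hilbert spaces, and write $x=(x',x_m)$. Since $S^\alpha_t=S^{\alpha'}_{t'}\otimes S^{\alpha_m}_{t_m}$, we have
$$\mu^{\mathcal A}_\alpha(f)(x)=\mu^{\mathcal B}_{\alpha_m}(F)(x_m),\qquad \mathcal B=L^2\Bigl((0,\infty)^{m-1},\tfrac{dt'}{t'};\mathcal A\Bigr),$$
where $F(y_m)(t')=S^{\alpha'}_{t'}f(x',y_m)$ for fixed $x'$. Equivalently, we first apply the one-variable operator in $x_m$ with values in $\mathcal A$, and then $\mu^{\mathcal A'}_{\alpha'}$ in $x'$ with $\mathcal A'=L^2((0,\infty),dt_m/t_m;\mathcal A)$. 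The order of application is chosen so that the estimates chain correctly.

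The plan is to avoid composing the weak-type bounds directly, since weak $L^1$ does not compose. Instead we use Theorem 7/8-type machinery: by the resonance theorem it suffices to prove a local estimate with a loss of a log. Concretely, take $E\subset E'\times I$ compact. For fixed $x_m$, the function $g(x')=f(x',\cdot)$ feeds into the $m=1$ bound, which is vector valued in $x_m$. We aim for the inequality
$$|\{x\in E:\mu^{\mathcal A}_\alpha f>\lambda\}|\le C\lambda^{-1}\Bigl(1+\int |f|_{\mathcal A}(\log(2+|f|_{\mathcal A}))^{m-1}\Bigr).$$
Since $E$ is compact, we may split $f$ into a large-value part and an $L^2$ part. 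The part $f\chi_{\{|f|\le1\}}$ has $L^2$ norm at most $\|f\|_1^{1/2}$, and $\mu^{\mathcal A}_\alpha$ is $L^2$ bounded by Plancherel and (3.7); Chebyshev then gives a contribution of at most $C(1+\|f\|_1)/\lambda$ on bounded sets, after restricting to $\lambda\ge1$, with $\lambda<1$ trivial since $|E|<\infty$.

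For the large part we use the classical local fact: weak $(1,1)$ on a finite-measure set in one variable implies $L\log L\to L^1$ locally, i.e.
$$\int_I|Tg|\le C|I|+C\int|g|\log(2+|g|)$$
for sublinear $T$ of weak type $(1,1)$ and bounded in $L^2$. This is Zygmund's lemma, proved by layer-cake summation of the weak $(1,1)$ bound. Applying this in $x_m$ with values in $\mathcal A'$ and integrating over $x'$ produces a function $G(x')$ with $\int_{E'}|G|_{\mathcal A''}\,dx'\lesssim 1+\int|f|(\log(2+|f|))$ in $L\log L$ per slice. We then apply $P(m-1)$ to $G$ with Hilbert space $\mathcal A''$. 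This requires controlling $\int |G|(\log(2+|G|))^{m-2}$, which amounts to iterating Zygmund's $L(\log L)^{k}\to L(\log L)^{k-1}$ local lemma. This accumulates exactly $m-1$ logs.

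The main obstacle is justifying this interchange. The operator in $x_m$ is applied before the $x'$ operator, yet we need a pointwise vector-valued domination so that $\mu_\alpha$ equals $\mu_{\alpha'}$ of something whose $x_m$-integrals are controlled. Local integrability, rather than norm, is lost in the $x_m$ direction, so the truncation to $x_m\in I$ must be handled via the compactness of $E$ and the decay of $\varphi^{(\alpha)}$ for far-away pieces. Theorem 8 is then used afterwards, not here, to remove the dependence on $E$ and the additive $1$ via dilation and translation invariance, which gives Theorem 6.
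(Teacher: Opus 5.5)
Your base case matches the paper's: $P(1)$ follows from the Corollary with some $p\in(2/(2\alpha_1+1),1)$ together with the vector-valued form of Lemma 11.

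The inductive step has a genuine gap. It sits exactly at the point you call ``the main obstacle'' and leave unresolved.

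Read coherently, your scheme runs as follows. For fixed $x_m$ write $\mu^{\mathcal A}_\alpha(f)(x',x_m)=\mu^{\mathcal A'}_{\alpha'}(G_{x_m})(x')$, where $G_{x_m}(x')=\bigl(t_m\mapsto S^{\alpha_m}_{t_m}(f(x',\cdot))(x_m)\bigr)\in\mathcal A'$. Then apply $P(m-1)$ in $x'$, integrate over $x_m\in I$, and finish with one application of Lemma 12 (with $\ell=m-2$) in $x_m$ for each fixed $x'$.

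The problem is a mismatch between these two steps. The right side of $P(m-1)$ involves $\int_{\bold R^{m-1}}|G_{x_m}|_{\mathcal A'}(\log(2+|G_{x_m}|_{\mathcal A'}))^{m-2}\,dx'$ over all of $\bold R^{m-1}$, not over $E'$ as you write. Lemma 12, however, carries an additive constant $C_I$ for every $x'$, and this diverges when integrated over $\bold R^{m-1}$. With the other ordering (weak $(1,1)$ for $\mu^{\mathcal B}_{\alpha_m}$ first), the same defect appears in the $x_m$ direction.

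This is not an artifact of a lossy lemma. Take $f(x',x_m)=2\chi_{[0,\psi(x')]}(x_m)$ with $0<\psi\le 1/4$, $\psi\in L^1(\bold R^{m-1})$ and $\psi\log(1/\psi)\notin L^1$. Then $\int|f|(\log(2+|f|))^{m-1}<\infty$. But $|G_{x_m}(x')|_{\mathcal A'}=\mu_{\alpha_m}(2\chi_{[0,\psi(x')]})(x_m)\gtrsim\psi(x')/x_m$ for $2\psi(x')\le x_m\le 1$. So for $I\supset[0,1]$ the quantity your argument must bound is infinite.

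Neither of your proposed remedies addresses this. The splitting into $f\chi_{\{|f|\le1\}}$ and $f\chi_{\{|f|>1\}}$ does nothing here, since $f=f\chi_{\{|f|>1\}}$. ``The decay of $\varphi^{(\alpha)}$'' is not an argument either: $\varphi^{(\alpha)}$ is compactly supported, and what matters is how each scale $t_i$ compares with the distance to $E$. Also, Theorems 7 and 8 play no role in proving $P(n)$; they are only used afterwards to get Theorem 6.

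The missing idea is the paper's splitting (7.2) of each $t_i$ into $(0,1)$ and $[1,\infty)$.

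\emph{All $t_i<1$.} The operator only sees $f$ on $E+[-1,1]^n$, so $f$ may be assumed compactly supported and every additive constant is finite. The paper then uses the weak $(1,1)$ bound for $\mu^{\mathcal K}_{\alpha_1}$ and Lemma 12 iterated with $\ell=0,\dots,n-2$. No induction hypothesis is needed for this piece.

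\emph{Some $t_i\ge1$.} Minkowski's inequality and (7.4) dominate those variables pointwise by a convolution of $|f|$ with $\prod(1+|y_i|)^{-1}$. These variables are absorbed into the Hilbert space $\mathcal K$, and $P(m)$ is applied in the remaining $m<n$ variables. The non-integrable kernel is handled by Jensen's inequality with the probability measure $c_\epsilon\prod(1+|y_i|)^{-1-\epsilon}\,dy''$, together with the restriction $|x_i|\le K$.

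Once this localization is available, your ordering does work for the pieces with $t_1,\dots,t_{m-1}<1$. The pieces with some large $t_i$ in the $x'$ variables still need the second argument, and without it the induction does not close.
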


Let $n\geq 2$.  Assuming the proposition $P(m)$ for all $m$ with $1\leq m<n$, we prove $P(n)$.  Let 

\begin{equation} \label{tag7.1} 
\tilde{\mu}_{\alpha}^{\mathcal A}(f)(x) = \left( \int_0^{1}\dots \int_0^{1} 
|S_t^{\alpha}(f)(x)|_{\mathcal  A}^2\,\frac{dt_1}{t_1}\dots \frac{dt_n}{t_n}
 \right)^{1/2}. 
\end{equation}
 Put $I_1=(0,1)$, $I_2=[1, \infty)$.  We decompose 
\begin{equation} \label{tag7.2} 
 \mu_{\alpha}^{\mathcal A}(f)(x)^2 =  \sum_{(j_1,\dots ,j_n) \in 
\{1,2\}^n} \int_{I_{j_1}\times \dots \times I_{j_n}}
|S_t^{\alpha}(f)(x)|_{\mathcal A}^2\,\frac{dt_1}{t_1}\dots \frac{dt_n}{t_n}. 
\end{equation} 
Let $Q=I_1^m$, $R =I_2^{(n-m)}$  with 
$1\leq m < n$. We first consider 
$$F(x) = \int_{Q\times R}
|S_t^{\alpha}(f)(x)|_{\mathcal A}^2\,\frac{dt_1}{t_1}\dots \frac{dt_n}{t_n}.$$
Let $\mathcal K = L^2_{\mathcal A}(R, dt_{m+1}/t_{m+1}\dots dt_n/t_n)$ be the space of  
measurable functions $g$ on $R$ with values in $\mathcal A$ satisfying 
$$|g|_{\mathcal K} = \left(\int_{R} |g(t_{m+1},\dots, t_n)|_{\mathcal A}^2
\frac{dt_{m+1}}{t_{m+1}}\dots 
\frac{dt_n}{t_n}\right)^{1/2} < \infty .$$ 
We write $t'=(t_1,\dots, t_m), t^{\prime \prime}=(t_{m+1},\dots, t_n)$, etc. 
Let $k : \bold R^n \to \mathcal K$ 
be defined a.e. by 
$$(k(x))(t^{\prime \prime}) = \left(S^{\alpha_{m+1}}_{t_{m+1}}\otimes\dots \otimes  
S^{\alpha_{n}}_{t_{n}}\right)(f_{x'})(x^{\prime \prime}) 
\qquad (k(x) : R \to \mathcal A ), $$
where $f_{x'}$ is defined on $\bold R^{n-m}$ by 
 $f_{x'}(x^{\prime \prime}) = f(x)$. 
 Then 
\begin{equation}\label{tag7.3} 
 F(x) = \int_Q \left|\left(S^{\alpha_{1}}_{t_{1}}\otimes\dots \otimes  
S^{\alpha_{m}}_{t_{m}}\right)(k_{x^{\prime \prime}})(x')\right|^2_
{\mathcal K}\frac{dt_1}{t_1}\dots \frac{dt_n}{t_m}, 
\end{equation} 
where $k_{x^{\prime \prime}} : \bold R^{m} \to \mathcal K$ is defined  by 
 $k_{x^{\prime \prime}}(x') = k(x)$ and the operator 
 $S^{\alpha_{1}}_{t_{1}}\otimes\dots \otimes  S^{\alpha_{m}}_{t_{m}}$ 
is acting on the vector-valued function $k_{x^{\prime \prime}}(\cdot)$ ; 
this follows,  of course,  from  the observation that   
 $\left(S^{\alpha_{1}}_{t_{1}}\otimes\dots \otimes S^{\alpha_{m}}_{t_{m}}\right)
 (k_{x^{\prime \prime}})(x') : R \to \mathcal A $ is determined by the relation 
 $$\left((S^{\alpha_{1}}_{t_{1}}\otimes\dots \otimes  
S^{\alpha_{m}}_{t_{m}})(k_{x^{\prime \prime}})(x')\right)(t^{\prime \prime})
=S_t^{\alpha}(f)(x).$$
 
 We recall that $\varphi^{(\alpha)}(u)= 
 \alpha|1-|u||^{\alpha -1}(\chi_{[0,1]}(u)-\chi_{[-1,0]}(u))$ ($\alpha>0$) and note that 
\begin{equation}\label{tag7.4} 
  \int_1^{\infty}|\varphi^{(\alpha)}(u/t)|^2t^{-3}\,dt 
 \leq C_{\alpha}(1+|u|)^{-2}. 
\end{equation} 
 Let $E = \{x \in \bold R^n : |x_i| \leq K \}$.   
 By using the proposition $P(m)$ with $\mathcal A = \mathcal K$   
\begin{multline}\label{tag7.5}  
 \left|\left\{x\in E : F(x)^{1/2} > \lambda \right\}\right|   
\\ 
\leq c\lambda^{-1}\int_{-K}^K\dots \int_{-K}^K M(x_{m+1},\dots ,x_n) 
\,dx_{m+1}\dots dx_n, 
\end{multline} 
where 
$$ M(x_{m+1},\dots ,x_n)=
1+ \int_{\bold R^m}|k(x)|_{\mathcal K}\left(\log 
\left(2 + |k(x)|_{\mathcal K}\right)\right)^{m-1}\,dx_1\dots dx_m.$$
By (7.4) and Minkowski's inequality we see that
$$ |k(x)|_{\mathcal K} \leq c\int 
|f(x',x^{\prime\prime}-y^{\prime\prime})|_{\mathcal A} 
(1+|y_{m+1}|)^{-1}\dots (1+|y_n|)^{-1}\,dy_{m+1}\dots dy_n,  $$
and hence for $\epsilon >0$ by Jensen's inequality we have 
\begin{multline*}  
|k(x)|_{\mathcal K}
\left(\log \left(2 + |k(x)|_{\mathcal K}\right)\right)^{m-1} 
\leq C_{\epsilon}\int |f(x',x^{\prime\prime}-y^{\prime\prime})|_{\mathcal A}
\\ 
\times\left (\log \left (2 + 
|f(x',x^{\prime\prime}-y^{\prime\prime})|_{\mathcal A}\prod_{i=m+1}^n(1+|y_{i}|)^{\epsilon}
\right )\right )^{m-1} 
 \left(\prod_{i=m+1}^n(1+|y_{i}|)^{-1}\right)\,dy^{\prime\prime} 
 \end{multline*}  
 (note that $c_\epsilon\left(\prod_{i=m+1}^n(1+|y_{i}|)^{-1-\epsilon}\right)
 \,dy^{\prime\prime}$ 
 is a probability measure for some $c_\epsilon >0$).
 Thus the right hand side of (7.5) is majorized by 
 $$c\lambda^{-1}+ c\lambda^{-1}\int_{\bold R^n}|f(x)|_{\mathcal A}\left(\log 
\left(2 + |f(x)|_{\mathcal A}\right)\right)^{m-1}\,dx.$$
The other terms except $\tilde{\mu}^{\mathcal A}_{\alpha}(f)$ (see (7.1))  
in the decomposition (7.2) can be treated similarly.

 To estimate $\tilde{\mu}^{\mathcal A}_{\alpha}(f)$ we 
need some preparations.  
 Let $T$ be a subadditive operator 
from $L^1_{\mathcal A}(\bold R)$ (see \S 1 for the definitions of the spaces of vector-valued 
functions) to the space of measurable functions on $\bold R$ ; 
here, the subadditivity  means  
$$|T(f+g)| \leq |T(f)| +|T(g)| \qquad \text{for all} 
\quad f, g \in L^1_{\mathcal A}(\bold R).$$
Then we have the following.

\begin{lemma}
Let $0<p<1<q$ and suppose that 
$$\|Tf\|_p \leq C_p\|f\|_{H^p_{\mathcal A}} \qquad \text{for}\quad  
f\in H^p_{\mathcal A} 
$$
and 
$$\|Tf\|_q \leq C_q\|f\|_{q, \mathcal A} \qquad \text{for} \quad 
f\in L^1_{\mathcal A}\cap L^q_{\mathcal A}.$$ 
Then, for all $\lambda >0$, 
\begin{multline} \label{tag7.6}
|\{x\in \bold R : |Tf(x)|>\lambda \}|
\\ 
\leq 
C_1\lambda^{-q}\int_{|f|_{\mathcal A}\leq \lambda}|f(x)|^q_{\mathcal A}\, dx + C_2\lambda^{-1}\int_{|f|_{\mathcal A}> 
\lambda}|f(x)|_{\mathcal A}\, dx 
\end{multline}  
for $f\in L^1_{\mathcal A}$, with some positive constants $C_1, C_2$. 
\end{lemma}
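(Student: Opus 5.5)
We split $f$ at height $\lambda$ with a Calderón--Zygmund decomposition at level $\lambda$ applied to the scalar function $|f|_{\mathcal A}$, but keep the pieces vector-valued, and treat the good and bad parts with the $L^q$ and $H^p$ hypotheses respectively. A cleaner variant, which we adopt, is the following. Let $\Omega=\{M(|f|_{\mathcal A})>\lambda\}$, where $M$ is the Hardy--Littlewood maximal function, and let $\{I_k\}$ be its Whitney intervals. Set $f=g+b$ with $b=\sum_k b_k$, where
\[
b_k=\Bigl(f-\frac{1}{|I_k|}\int_{I_k}f\Bigr)\chi_{I_k}\quad\text{(Bochner averages)},
\]
so each $b_k$ is supported in $I_k$, has mean zero, and satisfies $\int|b_k|_{\mathcal A}\le 2\int_{I_k}|f|_{\mathcal A}$. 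Then $|g|_{\mathcal A}\le c\lambda$ a.e. Subadditivity gives
\[
|\{|Tf|>\lambda\}|\le|\{|Tg|>\lambda/2\}|+|\{|Tb|>\lambda/2\}|.
\]

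For the good part, Chebyshev and the $L^q$ bound give $|\{|Tg|>\lambda/2\}|\le C\lambda^{-q}\int|g|^q_{\mathcal A}$. Off $\Omega$, $g=f$ and $|f|_{\mathcal A}\le\lambda$ a.e. On $I_k$, $|g|_{\mathcal A}\le c\lambda$, so $\int_{I_k}|g|_{\mathcal A}^q\le c\lambda^q|I_k|$. Since $\sum|I_k|=|\Omega|\le C\lambda^{-1}\|f\|_{1}$, this is not yet of the required truncated form. To get (7.6) with its split, we refine the decomposition first: write $f=f\chi_{\{|f|_{\mathcal A}\le\lambda\}}+f\chi_{\{|f|_{\mathcal A}>\lambda\}}=f_1+f_2$. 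Apply the $L^q$ bound to $f_1$ directly, which gives exactly $C_1\lambda^{-q}\int_{|f|\le\lambda}|f|^q$. Then run the decomposition above on $f_2$ only. Now $\|f_2\|_{1,\mathcal A}=\int_{|f|>\lambda}|f|_{\mathcal A}$, so $|\Omega|\le C\lambda^{-1}\int_{|f|>\lambda}|f|_{\mathcal A}$, and the good part $g_2$ of $f_2$ satisfies
\[
\int|g_2|^q_{\mathcal A}\le c\lambda^{q-1}\|f_2\|_1,
\]
using $|g_2|_{\mathcal A}\le c\lambda$ together with $\|g_2\|_1\le\|f_2\|_1$. Hence $\lambda^{-q}\int|g_2|^q\le C\lambda^{-1}\int_{|f|>\lambda}|f|_{\mathcal A}$.

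For the bad part $b$ of $f_2$, each $b_k$ is a multiple of a vector-valued $H^p$ atom: it is supported on $I_k$, has mean zero, and we take the $L^1$-normalized molecular size. The main obstacle is that $b_k$ is controlled only in $L^1$, not in $L^\infty$, so it is not a genuine $(p,\infty)$-atom. We would handle this by estimating the grand maximal function directly. For $x\notin 2I_k$, with $c_k$ the centre of $I_k$, the mean-zero condition gives
\[
\sup_t|\eta_t\star b_k(x)|_{\mathcal A}\le C|I_k|\,|x-c_k|^{-2}\|b_k\|_{1,\mathcal A}.
\]
Thus, since $p>1/2$ after possibly choosing $p$ closer to $1$, which is allowed by interpolating the hypothesis,
\[
\int_{(2I_k)^c}\bigl(\sup_t|\eta_t\star b_k|_{\mathcal A}\bigr)^p\le C|I_k|^{1-p}\|b_k\|_1^p.
\]
We then apply the $H^p$ bound to $\sum_k b_k$, restricting attention to $\mathbb R\setminus\Omega^*$ with $\Omega^*=\bigcup 2I_k$, whose measure $|\Omega^*|\le 2|\Omega|$ is already acceptable. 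A cleaner route is to use the $p$-triangle inequality for $\|\cdot\|_{H^p_{\mathcal A}}$ on the pieces. Combining Chebyshev, $\int|b_k|\le c\lambda|I_k|$, and Hölder over the sum gives
\[
|\{x\notin\Omega^*:|Tb|>\lambda/2\}|\le C\lambda^{-p}\sum_k|I_k|^{1-p}(\lambda|I_k|)^p=C\sum_k|I_k|\le C\lambda^{-1}\int_{|f|>\lambda}|f|_{\mathcal A}.
\]

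The delicate point is that the $H^p$ hypothesis controls $\|Tb_k\|_p$ globally, while the maximal function of $b_k$ on $2I_k$ is only weakly controlled. If this becomes a problem, we would replace each $b_k$ by an honest atom by truncating $f_2$ further at dyadic heights $2^j\lambda$ and summing the resulting geometric series.
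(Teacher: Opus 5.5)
Your architecture is the standard one; the paper itself gives no more than a citation of Theorem 3.37 of [5]. You split $f=f_1+f_2$ with $f_1=f\chi_{\{|f|_{\mathcal A}\le\lambda\}}$, apply the $L^q$ bound to $f_1$, and run a Calder\'on--Zygmund decomposition of $f_2$ at height $\lambda$. The good part $g_2$ goes to the $L^q$ bound (your estimate $\int|g_2|^q_{\mathcal A}\le c\lambda^{q-1}\|f_2\|_{1,\mathcal A}$ is fine), and the bad part $b=\sum_k b_k$ goes to the $H^p$ bound.

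There is a genuine gap, exactly at the step you call delicate and then leave open. Since $T$ is an abstract sublinear operator, the only information about $Tb$ is $\|Tb\|_p\le C_p\|b\|_{H^p_{\mathcal A}}$. There is no kernel to exploit, so discarding $\Omega^*=\bigcup_k 2I_k$ on the output side gains nothing. You must bound the full quasi-norm of $b$, including $\int_{2I_k}(\sup_{t>0}|\eta_t\star b_k|_{\mathcal A})^p\,dx$. You only estimate the integral over $(2I_k)^c$, yet your final chain uses $\|b_k\|^p_{H^p_{\mathcal A}}\le C|I_k|^{1-p}\|b_k\|^p_{1,\mathcal A}$ as though it were established. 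The dyadic-truncation fallback does not supply this as sketched. Cutting $b_k$ into pieces of height about $2^j\lambda$ that still live on $I_k$ gives only $\|b_k\|^p_{H^p_{\mathcal A}}\lesssim\sum_j(2^j\lambda)^p|I_k|$, with no geometric decay in $j$.

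The missing idea is Kolmogorov's inequality. Because $|\eta|$ has an integrable radially decreasing majorant, $\sup_{t>0}|\eta_t\star b_k|_{\mathcal A}\le C\,M(|b_k|_{\mathcal A})$, and $M$ is of weak type $(1,1)$. For $0<p<1$ this yields
\[
\int_{2I_k}\Bigl(\sup_{t>0}|\eta_t\star b_k|_{\mathcal A}\Bigr)^p\,dx\le C_p\,|I_k|^{1-p}\,\|b_k\|_{1,\mathcal A}^p .
\]
Together with your tail bound this gives $\|b_k\|^p_{H^p_{\mathcal A}}\le C|I_k|^{1-p}\|b_k\|^p_{1,\mathcal A}\le C\lambda^p|I_k|$. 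By $p$-subadditivity of $\|\cdot\|^p_{H^p_{\mathcal A}}$, $\|b\|^p_{H^p_{\mathcal A}}\le C\lambda^p|\Omega|$. Hence $|\{x\in\mathbb{R}:|Tb(x)|>\lambda/4\}|\le C|\Omega|\le C\lambda^{-1}\int_{\{|f|_{\mathcal A}>\lambda\}}|f|_{\mathcal A}\,dx$, on all of $\mathbb{R}$.

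A secondary point: your tail bound needs $p>1/2$. Raising $p$ ``by interpolating the hypothesis'' is itself a nontrivial real-interpolation theorem for sublinear operators between $H^p_{\mathcal A}$ and $L^q_{\mathcal A}$. It is simpler to subtract on each $I_k$ the $L^2(I_k)$-projection of $f_2$ onto polynomials of degree at most $N$, with $p(N+2)>1$. This projection is still bounded by $c|I_k|^{-1}\int_{I_k}|f_2|_{\mathcal A}\le c\lambda$, and the extra vanishing moments give tail decay $|I_k|^{N+1}|x-c_k|^{-N-2}\|b_k\|_{1,\mathcal A}$. In the paper's application you may in any case choose $p\in(\max\{1/2,2/(2\alpha+1)\},1)$, which is nonempty since $\alpha>1/2$.
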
 

In the scalar-valued case, this follows from  \cite[Theorem 3.37]{5}.  
The proof given there easily extends to the vector valued setting under consideration.

\begin{lemma} 
Suppose that $T$ is an operator from $L^1_{\mathcal A}(\bold R)$ to 
the space of 
measurable functions satisfying the estimate $(7.6)$. 
 Then if $|f|_{\mathcal A}\left(\log (2+|f|_{\mathcal A})\right)^{\ell+1}$, 
$0\leq \ell <\infty$, 
 is integrable over $\bold R$,  we have 
 $$\int_{B}|Tf(x)|\left(\log (2+|Tf(x)|)\right)^{\ell}\,dx 
\leq C_{\ell,B}\left(1+ \int_{\bold R}|f(x)|_{\mathcal A}
\left(\log (2+|f(x)|_{\mathcal A})\right)^{\ell+1}\, dx\right),$$ 
 where $B$ is a compact set in $\bold R$.  
\end{lemma}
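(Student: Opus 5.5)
We integrate the distribution function of $|Tf|$ over $B$, using the weak type estimate (7.6) at each level $\lambda$. Write $\Lambda(s)=s(\log(2+s))^{\ell}$. Since $\Lambda$ is increasing, $\Lambda(0)=0$ and $\Lambda'(s)\le c(\log(2+s))^{\ell}$, the layer cake formula gives
\begin{equation*}
\int_B \Lambda(|Tf(x)|)\,dx=\int_0^\infty \Lambda'(\lambda)\,|\{x\in B: |Tf(x)|>\lambda\}|\,d\lambda .
\end{equation*}
We split the $\lambda$-integral at $\lambda=1$. For $0<\lambda\le 1$ we bound the distribution function trivially by $|B|$ and use $\Lambda'\le c$ there, which contributes $c|B|$. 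This is where the constant $1$ on the right-hand side comes from, and it explains why a compact set $B$ is needed. For $\lambda>1$ we discard the restriction to $B$ and insert (7.6). This leaves two terms, which we treat with Fubini's theorem in turn.

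For the $L^q$ part we interchange the order of integration:
\begin{equation*}
\int_1^\infty (\log(2+\lambda))^{\ell}\lambda^{-q}\int_{|f|_{\mathcal A}\le\lambda}|f|^q_{\mathcal A}\,dx\,d\lambda
=\int |f(x)|^q_{\mathcal A}\int_{\max(1,|f(x)|_{\mathcal A})}^\infty(\log(2+\lambda))^{\ell}\lambda^{-q}\,d\lambda\,dx .
\end{equation*}
Since $q>1$, the inner integral is at most $c\,\max(1,|f|_{\mathcal A})^{1-q}(\log(2+|f|_{\mathcal A}))^{\ell}$. Multiplying by $|f|^q_{\mathcal A}$ gives at most $c\,|f|_{\mathcal A}(\log(2+|f|_{\mathcal A}))^{\ell}$, whether $|f|_{\mathcal A}\le 1$ or $|f|_{\mathcal A}>1$. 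In particular the $L^q$ term costs no extra logarithm.

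For the $L^1$ part the same interchange gives
\begin{equation*}
\int |f(x)|_{\mathcal A}\int_1^{\max(1,|f(x)|_{\mathcal A})}(\log(2+\lambda))^{\ell}\lambda^{-1}\,d\lambda\,dx .
\end{equation*}
The inner integral is at most $c(\log(2+|f|_{\mathcal A}))^{\ell+1}$, and this is precisely the source of the loss of one logarithm. Adding the three contributions yields the stated bound with $C_{\ell,B}$ depending on $\ell$, $q$, $C_1$, $C_2$ and $|B|$.

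The only delicate point is the bookkeeping in the two Fubini computations: the $L^q$ term must be shown not to lose a logarithm (this uses $q>1$), while the $L^1$ term loses exactly one. There is no genuine obstacle beyond this. Measurability of $Tf$ is assumed, and the hypothesis $|f|_{\mathcal A}(\log(2+|f|_{\mathcal A}))^{\ell+1}\in L^1$ implies $f\in L^1_{\mathcal A}$, so (7.6) applies.
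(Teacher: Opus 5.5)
Your proposal is correct and follows the paper's proof essentially step for step. The shared steps are the layer-cake identity (7.7), the trivial bound $|B|$ for $\lambda\le 1$, and the two Fubini computations (7.8) and (7.9) for the $L^q$ and $L^1$ parts of (7.6). The only difference is that the paper first verifies $\lambda(\log(2+\lambda))^{\ell}|\{x\in B:|Tf(x)|>\lambda\}|\to 0$ before stating (7.7). You legitimately bypass that step by deriving the layer-cake formula directly from Tonelli's theorem, which needs no decay at infinity.
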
 

\begin{proof}
We  note that if 
$|f|_{\mathcal A}\left(\log (2+|f|_{\mathcal A})\right)^{\ell+1}$ 
is integrable over $\bold R$, then  
$$\lambda\left(\log (2+\lambda)\right)^{\ell}|\{x\in B : |Tf(x)|>\lambda\}| 
\to 0 \quad \text{as $\lambda \to \infty$.}$$ 
To show this, we first see that  
$$ \left(\log (2+\lambda)\right)^{\ell}\int_{|f|_{\mathcal A}> 
\lambda}|f(x)|_{\mathcal A}\,dx 
\leq \int_{|f|_{\mathcal A}> \lambda}|f(x)|_{\mathcal A}
\left(\log (2+|f(x)|_{\mathcal A})\right)^{\ell}\, dx \to 0 $$
as $\lambda \to \infty$.  Next,  
\begin{multline*}  
\lambda^{1-q}\left(\log (2+\lambda)\right)^{\ell}\int_{|f|_{\mathcal A}
\leq \lambda}|f(x)|^q_{\mathcal A}\, dx 
\\
\leq c\left(\log (2+\lambda)\right)^{-1}\int_{|f|_{\mathcal A}\leq \lambda}|f(x)|_{\mathcal A}
\left(\log (2+|f(x)|_{\mathcal A})\right)^{\ell+1}\, dx.
\end{multline*}  
The right hand side also tends to $0$ as $\lambda \to \infty$.  By (7.6), these observations 
imply our assertion. 
 
 Consequently,  we easily see that 
\begin{align} \label{tag7.7}
&\int_B|Tf(x)|\left(\log (2+|Tf(x)|)\right)^\ell\, dx  
\\ 
&= \int_0^\infty\left(\left(\log (2+\lambda)\right)^\ell + 
\frac{\ell\lambda}{\lambda+2}
\left(\log (2+\lambda)\right)^{\ell-1}\right)|\{x\in B : |Tf(x)|>
\lambda\}|\,d\lambda                                  \notag 
\\ 
&\leq c\int_0^\infty \left(\log (2+\lambda)\right)^\ell
|\{x\in B : |Tf(x)|>\lambda\}|\,d\lambda.             \notag
\end{align}   

Note that, for $A>0$, 
$$\int_A^{\infty}\lambda^{-q}(\log (2+\lambda))^\ell\, d\lambda\leq 
C_{\ell,q}A^{-q+1}(\log (2+A))^\ell.$$
Therefore  
\begin{align} \label{tag7.8}  
&\int_0^{\infty}\lambda^{-q}(\log (2+\lambda))^\ell\left(
\int_{|f|_{\mathcal A}\leq \lambda}|f(x)|^q_{\mathcal A}\, dx\right)
\, d\lambda 
\\
&= \int |f(x)|^q_{\mathcal A}\left(\int_{|f(x)|_{\mathcal A}}^{\infty}
\lambda^{-q}(\log (2+\lambda))^\ell\, d\lambda\right)\, dx    \notag 
\\
&\leq c\int |f(x)|_{\mathcal A}\left(\log (2+|f(x)|_{\mathcal A})\right)^\ell\, dx.                                                            \notag
\end{align}  

Next, we note that, for $A>1$, 
$$\int_1^{A}\lambda^{-1}(\log (2+\lambda))^\ell\, d\lambda\leq 
(\log (2+A))^{\ell+1}.$$ 
Using this, we see that    
\begin{align} \label{tag7.9}   
&\int_1^{\infty}\lambda^{-1}(\log (2+\lambda))^\ell\left(
\int_{|f|_{\mathcal A}> \lambda}|f(x)|_{\mathcal A}\, dx\right)\, d\lambda  
\\
&= \int_{|f|_{\mathcal A}> 1} |f(x)|_{\mathcal A}
\left(\int_1^{|f(x)|_{\mathcal A}}
\lambda^{-1}(\log (2+\lambda))^\ell\, d\lambda\right)\, dx      \notag
\\
&\leq \int |f(x)|_{\mathcal A}\left(\log (2+|f(x)|_{\mathcal A})\right)^{\ell+1}\, dx.                                                          \notag
\end{align} 

Since 
$$\int_0^1 \left(\log (2+\lambda)\right)^\ell|\{x\in B : |Tf(x)|>\lambda\}|
\,d\lambda \leq (\log 3)^\ell|B|,$$
by (7.6),  (7.7), (7.8) and (7.9) we get the conclusion.  This completes the proof of 
Lemma 12.  
\end{proof}

By Corollary in \S 1, the sublinear operator $\mu_{\alpha}^{\mathcal A}$ 
on $\bold R$, $\alpha > 1/2$, 
 maps  $H^p_{\mathcal A}(\bold R)$ boundedly into $L^p_{\mathcal A}(\bold R)$ 
 for $p \in (2(2\alpha+1)^{-1}, 1)$. 
Since $\mu_{\alpha}^{\mathcal A}$ is also $L^2_{\mathcal A}$-bounded, 
we can apply Lemma 11 and, consequently, Lemma 12 to $\mu_{\alpha}^{\mathcal A}$.

Let $\mathcal K = L^2_{\mathcal A}((0, 1)^{n-1}, dt_{2}/t_{2}\dots dt_n/t_n)$ be the space of  
measurable functions $g$ on $(0, 1)^{n-1}$ with values in $\mathcal A$ satisfying 
$$|g|_{\mathcal K} = 
\left(\int_0^1\dots \int_0^1 |g(t_{2},\dots, t_n)|_{\mathcal A}^2
\frac{dt_{2}}{t_{2}}\dots 
\frac{dt_n}{t_n}\right)^{1/2} < \infty .$$  
Let $k : \bold R^n \to \mathcal K$ 
be defined a.e., as above,  by 
$$(k(x))(t^{\prime \prime}) = 
\left(S^{\alpha_{2}}_{t_{2}}\otimes\dots \otimes  
S^{\alpha_{n}}_{t_{n}}\right)(f_{x_1})(x^{\prime \prime}), $$
where  $f_{x_1}(x^{\prime \prime}) = f(x)$, 
$x^{\prime \prime}=(x_{2},\dots, x_n)$; $t^{\prime \prime}=(t_{2},\dots, t_n)$. 
 Then, as in (7.3) we can write  
 $$\tilde{\mu}_{\alpha}^{\mathcal A}(f)(x)^2 = 
\int_0^1 \left|S^{\alpha_{1}}_{t_{1}}  
 (k_{x^{\prime \prime}})(x_1)\right|^2_{\mathcal K}\frac{dt_1}{t_1}, $$
where $k_{x^{\prime \prime}} : \bold R \to \mathcal K$ is defined by 
 $k_{x^{\prime \prime}}(x_1) = k(x)$.

 Let $E$ be as in (7.5).  
 We shall prove 
 $$|\{x\in E : \tilde{\mu}_{\alpha}^{\mathcal A}(f)(x) > \lambda\}| \leq c\lambda^{-1} 
 \left(1+ \int |f(x)|_{\mathcal A}(\log (2+ |f(x)|_{\mathcal A}))^{n-1}\, 
dx\right).$$
 To prove this, we assume as we may that $f$ is supported in $E + [-1, 1]^n$. 
 By Corollary and Lemma 11 $\mu_{\alpha_1}^{\mathcal K}$ is of weak type $(1,1)$. 
 Therefore, using Fubini's theorem,   we get 
 $$|\{x\in E : \tilde{\mu}_{\alpha}^{\mathcal A}(f)(x) > \lambda\}| \leq 
 c\lambda^{-1}\int_B |k_{x^{\prime \prime}}(x_1)|_{\mathcal K}\, dx$$
 for some compact set $B$ determined by $E$. 
 Estimating the right hand side by the repeated applications of the similar arguments 
 using Lemma 12 with $\ell=0, 1, \dots , n-2$, we get the desired inequality.

 Collecting the results, we get, for any compact set $E$, 
 $$ 
 |\{x\in E : \mu_{\alpha}^{\mathcal A}(f)(x)>\lambda\}| \leq c\lambda^{-1} +
 c\lambda^{-1}\int_{\bold R^n}|f|_{\mathcal A}\left(\log \left(2+ |f|_{\mathcal A}\right)
 \right)^{n-1}\, dx.$$
 This proves the proposition $P(n)$, under the induction hypothesis.  
 Since the proposition $P(1)$ 
 follows from Corollary and Lemma 11, the proposition $P(n)$ has 
 been proved for all $n$.    
 
 The sublinear operator $\mu_\alpha$ ($\alpha_i>1/2$) is obviously invariant under translation and 
 dilation, and from the proposition $P(n)$ for the scalar-valued functions its continuity 
 in measure on $L(\log L)^{(n-1)}(\bold R^n)$ considered in \S 6 (see (6.1)) follows.  
 Therefore we can apply Theorem 8 with $\Phi(t)= t(\log(2+t))^{(n-1)}$  
 to $\mu_\alpha$ ($\alpha_i>1/2$) to get Theorem 6.

Finally, we remark that the argument used in the proof of Theorem 6 
would also apply to the proofs of weak type estimates for a wider class 
of multiparameter operators.

\end{document}